\DeclareMathOperator{\ran}{ran}
\DeclareMathOperator{\rk}{rk}
\DeclareMathOperator{\GL}{GL}
\DeclareMathOperator{\SL}{SL}
\DeclareMathOperator{\dom}{dom}
\DeclareMathOperator{\grad}{grad}
\DeclareMathOperator{\spa}{span}
\DeclareMathOperator{\tr}{tr}
\DeclareMathOperator{\opm}{M}
\DeclareMathOperator{\re}{re}
\newcommand{\ve}{\varepsilon}
\newcommand{\de}{\delta}
\newcommand{\bA}{\mathbb{A}}
\newcommand{\N}{\mathbb{N}}
\newcommand{\Z}{\mathbb{Z}}
\newcommand{\R}{\mathbb{R}}
\newcommand{\C}{\mathbb{C}}
\newcommand{\cC}{\mathcal{C}}
\newcommand{\cS}{\mathcal{S}}
\newcommand{\cV}{\mathcal{V}}
\newcommand{\cW}{\mathcal{W}}
\newcommand{\cZ}{\mathcal{Z}}
\newcommand{\JJ}{\mathscr{J}}
\newcommand{\cJ}{\JJ}
\newcommand{\rr}{\mathbbm r}
\newcommand{\ti}{{\rm t}}
\newcommand{\gx}{\omega}
\newcommand{\gy}{\upsilon}
\newcommand{\gX}{\Omega}
\newcommand{\gY}{\Upsilon}
\newcommand{\fl}{\mathscr{Z}}
\newcommand*{\mat}[1]{\opm_{#1}(\C)}
\newcommand{\Langle}{\mathop{<}\!}
\newcommand{\Rangle}{\!\mathop{>}}
\newcommand{\px}{\C\!\Langle x,x^*\Rangle}
\newcommand{\ax}{\C\!\Langle x\Rangle}
\newcommand{\ay}{\C\!\Langle y\Rangle}
\newcommand{\tx}{\C\!\Langle \tilde{x}\Rangle}
\newcommand{\axy}{\C\!\Langle x,y\Rangle}
\newcommand{\pxy}{\C\!\Langle x,x^*,y,y^*\Rangle}
\newcommand{\axyy}{\C\!\Langle x,y,y'\Rangle}
\newcommand{\gxy}{\C\!\Langle x,y,y^{-1}\Rangle}
\def\moverlay{\mathpalette\mov@rlay}
\def\mov@rlay#1#2{\leavevmode\vtop{
		\baselineskip\z@skip \lineskiplimit-\maxdimen
		\ialign{\hfil$#1##$\hfil\cr#2\crcr}}}
\newcommand{\plangle}{\moverlay{(\cr<}}
\newcommand{\prangle}{\moverlay{)\cr>}}
\newcommand{\rx}{\C\plangle x \prangle}
\newcommand{\rtx}{\C\plangle \tilde{x} \prangle}
\newcommand{\nice}{unsignatured\xspace}
\newcommand{\epic}{epic\xspace}
\newtheorem*{rep@thm}{\rep@title}
\newcommand{\newreptheorem}[2]{%
\newenvironment{rep#1}[1]{%
 \def\rep@title{#2 \ref{##1}}%
 \begin{rep@thm}}%
 {\end{rep@thm}}}
\newtheorem{thm}{Theorem}[section]
\newtheorem{lem}[thm]{Lemma}
\newtheorem{cor}[thm]{Corollary}
\newtheorem{prop}[thm]{Proposition}
\theoremstyle{definition}
\newtheorem{defn}[thm]{Definition}
\newtheorem{exa}[thm]{Example}
\theoremstyle{remark}
\newtheorem{rem}[thm]{Remark}
\newcommand{\mycontentsbox}{%
	{\centerline{NOT FOR PUBLICATION}
		\addtolength{\parskip}{-2.3pt}
		\tableofcontents}}
\def\enddoc@text{\ifx\@empty\@translators \else\@settranslators\fi
	\ifx\@empty\addresses \else\@setaddresses\fi
	\newpage\mycontentsbox\newpage\printindex}
\numberwithin{equation}{section}
\title[Nullstellens{\"a}tze for the free algebra]{Factorization of noncommutative polynomials\\[1mm]  and Nullstellens{\"a}tze for the free algebra}
\author[J. W. Helton]{J. William Helton${}^1$}
\address{J. William Helton, Department of Mathematics, University of California San Diego}
\email{helton@math.ucsd.edu}
\thanks{${}^1$Supported by the NSF grant DMS 1500835.}
\author[I. Klep]{Igor Klep${}^2$}
\address{Igor Klep, Department of Mathematics, University of Ljubljana}
\email{igor.klep@fmf.uni-lj.si}
\thanks{${}^2$Supported by the Slovenian Research Agency grants J1-8132, N1-0057 and P1-0222, 
	and partially supported by
	the Marsden Fund Council of the Royal Society of New Zealand.}
\author[J. Vol\v{c}i\v{c}]{Jurij Vol\v{c}i\v{c}${}^3$}
\address{Jurij Vol\v{c}i\v{c}, Department of Mathematics, Texas A\&M University}
\email{volcic@math.tamu.edu}
\thanks{${}^3$Supported by the Deutsche Forschungsgemeinschaft (DFG) Grant No. SCHW 1723/1-1.}
\subjclass[2010]{Primary 47A56, 15A22, 16U30; 
	Secondary 13A50, 13J30,  15A69}
\date{\today}
\keywords{Noncommutative polynomial, Nullstellensatz, factorization, singularity locus, linear pencil, free algebra, Positivstellensatz}
\begin{document}
	
\begin{abstract}
This article gives a class of  Nullstellens\"atze for noncommutative polynomials.
The singularity set of a noncommutative polynomial $f=f(x_1,\dots,x_g)$ is $\fl(f)=(\fl_n(f))_n$,
where $\fl_n(f)=\{X \in \mat{n}^g \colon \det f(X) = 0\}.$ 
The first main theorem of this article shows that 
the irreducible factors of $f$ are in a natural bijective correspondence with irreducible components of $\fl_n(f)$
for every sufficiently large $n$. 

With each polynomial $h$ in $x$ and $x^*$ one also associates 
its real singularity set $\fl^{\re}(h)=\{X\colon \det h(X,X^*)=0\}$.
A polynomial $f$  which depends on $x$ alone (no $x^*$ variables)  will be called analytic.
The main Nullstellensatz proved here is as follows:
for analytic $f$ but for $h$ dependent on possibly both  $x$ and $x^*$, the containment 
           $\fl(f) \subseteq \fl^{\re} (h)$
is equivalent to each factor of $f$  being ``stably associated'' to a factor of $h$ or of $h^*$.\looseness=-1

For perspective, classical Hilbert type Nullstellens\"atze typically apply only to analytic polynomials $f,h $, while real Nullstellens\"atze typically require adjusting the functions by sums of squares of polynomials (sos). Since the above  ``algebraic certificate'' does not involve a sos, it seems justified to think of this as the natural determinantal Hilbert Nullstellensatz. 
An earlier paper of the authors (Adv. Math. 331 (2018) 589--626) obtained such a theorem for special classes of analytic polynomials  $f$ and $h$. This paper requires few hypotheses and  hopefully brings this type of Nullstellensatz to near final form.\looseness=-1

 Finally, the paper gives a Nullstellensatz for zeros $\cV(f)=\{X\colon f(X,X^*)=0\}$ of a hermitian polynomial $f$, leading to a strong Positivstellensatz for quadratic free semialgebraic sets by the use of a slack variable.
\end{abstract}

\maketitle


\section{Introduction}

Hilbert's Nullstellensatz is a fundamental result in
classical algebraic geometry describing polynomials
vanishing on a complex algebraic variety. It has been generalized or extended to various settings, including many noncommutative ones. For instance,
Amitsur's Nullstellensatz \cite{Ami57} (see also \cite{Pro66}) describes noncommutative polynomials vanishing on the common vanishing set of a given finite set of polynomials in a full matrix algebra. 
The papers
\cite{BK09,KS14}
discuss vanishing traces of noncommutative polynomials, Cimpri\v c \cite{Cim19} 
considers Nullstellensatz questions involving polynomial partial differential operators, 
 Salomon,  Shalit and Shamovich
\cite{SSS18} give free noncommutative analytic Nullstellens\"atze, 
Reichstein and  Vonessen
\cite{RV07} develop a 
 framework for such questions in the context of rings with polynomial identities (see also \cite{vOV81}), etc.\looseness=-1

In a slightly different direction, the 
real Nullstellensatz and Positivstellensatz are pillars of real algebraic geometry \cite{BCR}.
These too have seen a plethora of noncommutative extensions.
For example, the articles \cite{CHMN13,Oza16,Scm09}
develop general frameworks, 
and these ideas yield various applications, see e.g. Positivstellens\"atze on quantum graphs \cite{NT15}, isometries \cite{HMP}, convex semialgebraic sets \cite{HKM12}, and algebraic approaches to Connes' embedding conjecture \cite{Oza13}.

In this article we prove new complex and real Nullstellens\"atze for the free algebra and provide a geometric interpretation for factorization of noncommutative polynomials \cite{Coh,BS,BHL}. 
The motivation for this work comes from the rapidly emerging areas of free analysis \cite{AM,KVV3,SSS18,DK} and free real algebraic geometry \cite{Oza13,NT15,HKMV} that study function theory on (semi)algebraic sets in the space of matrix tuples of all sizes.
For noncommutative polynomials $f$ there are three major types of zeros. These are
hard zeros $\{X \colon f(X)=0 \}$ \cite{Ami57,SSS18}, 
directional zeros
$\fl_{\rm dir}(f)=\{(X,v) \colon f(X)v =0 \}$ \cite{HM04,HMP07}, and determinantal zeros 
$\fl(f)=\{X\colon \det f(X)=0\}$ \cite{KV17,HKV}.
For each of these one hopes there will be a  Hilbert type Nullstellensatz (treating inclusion of zero sets) and a real Nullstellensatz (treating ``real'' points in zero sets). For directional zeros, reasonably satisfying theorems along these lines exist; see \cite{HMP07} for a Nullstellensatz and \cite{CHMN13} for a real Nullstellensatz. This article concerns determinantal zeros in Sections \ref{sec2} and \ref{sec3}, and hard zeros in Section \ref{sec4}.\looseness=-1

Our algebraic certificates are a counterpart to Hilbert's Nullstellensatz, 
and such theorems often carry direct applications to, say, semidefinite optimization and control theory \cite{SIG,BGM}. 
Furthermore, we hope these results may be of interest to researchers in noncommutative algebra, matrix theory or polynomial identities.

\subsection{Main results}

Let $x=(x_1,\dots,x_g)$ be freely noncommuting variables. Elements of
$$\ax{}^{\de\times\de}=\mat{\de}\otimes \ax$$
are {\bf noncommutative matrix  polynomials}. If $f=\sum_{w}f_ww\in\ax^{\de\times \de}$ and $X\in\mat{n}^g$, then $f(X)=\sum_{w}f_w\otimes w(X)\in\mat{\de n}$ denotes the evaluation of $f$ at $X$.

\subsubsection{Complex Nullstellens\"atze}

Our first main result gives a geometric interpretation of irreducibility in free algebras. We say that $f\in\ax^{\de\times\de}$ is {\bf full} \cite[Section 0.1]{Coh} if it cannot be factored as $f=f_1f_2$ for $f_1\in\ax^{\de\times \ve}$ and $f_2\in\ax^{\ve\times \de}$ with $\ve<\de$. In the special case $\delta=1$, a polynomial $f\in\ax$ is full if and only if $f\neq0$. Following \cite{Coh} we call $f$ an {\bf atom} if $f$ is not invertible in $\ax^{\de\times\de}$ and $f$ cannot be written as a product of non-invertible elements in $\ax^{\de\times\de}$. In particular, every atom is full. Every full matrix admits a complete factorization into atoms \cite[Proposition 3.2.9]{Coh}.

\begin{repthm}{t:irr}
A matrix polynomial $f$ is an atom if and only if
$\det f|_{\mat{n}^g}$
is an irreducible polynomial for all but finitely many $n\in\N$.
\end{repthm}

This theorem leads to a geometric description of factorization in free algebras. 
The geometric objects we use are free singularity sets. 
For $f\in\ax^{\de\times\de}$ let
$$
\fl(f) =\bigcup_n \fl_n(f),\qquad \fl_n(f) =\left\{X\in\mat{n}^g
\colon \det f(X)=0 \right\}
$$
be its {\bf free locus}. In \cite[Theorem 4.3]{HKV} it was shown that components of $\fl(f)$ correspond to a factorization of $f$ in $\ax^{\de\times\de}$ \emph{if $f(0)$ is an invertible matrix}. Theorem \ref{t:new} below,
 a free analog of Hilbert's Nullstellensatz,
 disposes of this assumption. Its proof is based on the aforementioned special case and novel techniques involving point-centered ampliations and representation theory.

To deal with the non-uniqueness of factorization in free algebras, Cohn introduced stable associativity \cite[Section 0.5]{Coh}.
Polynomials $f_1\in\ax^{\de_1\times\de_1}$ and $f_2\in\ax^{\de_2\times\de_2}$ are 
{\bf stably associated}  if there exist $\ve_1,\ve_2\in\N$ with $\de_1+\ve_1=\de_2+\ve_2$ and invertible $P,Q\in\ax^{(\de_1+\ve_1)\times (\de_1+\ve_1)}$ such that
$$f_2\oplus I_{e_2} = P(f_1\oplus I_{e_1})Q.$$
Actually, if $f_1$ and $f_2$ are stably associated, one can always choose $\ve_1=\de_2$ and $\ve_2=\de_1$ \cite[Theorem 0.5.3]{Coh}. In particular, for $f_1,f_2\in\ax$ stable associativity becomes a question about $2\times 2$ matrices over $\ax$. There is a straightforward procedure to generate all stably associated pairs, see \cite[Section 2.7]{Coh}.

\begin{repthm}{t:new}[Singul{\"a}rstellensatz]
Let $f_1,\dots,f_s,h$ be full matrix polynomials. Then $\bigcap_j\fl(f_j)\subseteq \fl(h)$ if and only if for some $1\le j\le s$, every atomic factor of $f_j$ is stably associated to a factor of $h$.
\end{repthm}

See Theorem \ref{t:new} and Proposition \ref{p:inter} for the proof.

\subsubsection{Real Nullstellens\"atze}
We next turn our attention to the ``real'' setting with an involution. 
Let $x^*=(x_1^*,\dots,x_g^*)$ be formal adjoints to $x$. The map $x_j\mapsto x_j^*$ extends to a unique involution $*$ on
$$\px{}^{\de\times\de}=\mat{\de}\otimes\px$$
restricting to the conjugate transpose on $\mat{\de}$. For $f\in\px^{\de\times\de}$ we define its {\bf real free locus} and the {\bf real free zero set}:
\begin{alignat*}{3}
\fl^{\re}(f)&=\bigcup_{n\in\N}\fl_n^{\re}(f),\qquad 
&\fl_n^{\re}(f) &= \left\{X\in \mat{n}^g\colon \det f(X,X^*)=0\right\},\\
\cV^{\re}(f)&=\bigcup_{n\in\N}\cV_n^{\re}(f),\qquad 
&\cV_n^{\re}(f) &= \left\{X\in \mat{n}^g\colon f(X,X^*)=0\right\}.
\end{alignat*}

A matrix polynomial $f$ depending on $x$ but not on $x^*$ is called {\bf analytic}. For such an $f$ we have
$\fl^{\re}(f)=\fl(f)$.

\begin{repthm}{t:an}[Analytic Singul{\"a}rstellensatz]
Let $f_1,\ldots,f_s$ be analytic atoms in $x$ and $h$ a full matrix polynomial. Then 
$\bigcap_j\fl^{\re}(f_j)\subseteq\fl^{\re}(h)$ if and only if 
there is $j$ such that
$f_j$ or $f_j^*$ is stably associated to a factor of $h$.
\end{repthm}

A straightforward extension of Theorem \ref{t:an}
where both $f,h$ are allowed to contain $x^*$ fails even if $f=f^*$ is hermitian, see Example \ref{e:sos}. A natural class of $f$ for which the conclusion does hold are \nice matrix polynomials $f$. A hermitian polynomial $f=f^*$ is {\bf \nice} if there exist $n\in\N$ and $X,Y\in\mat{n}^g$ such that $f(X,X^*),f(Y,Y^*)$ are invertible and have different signatures.

\begin{repthm}{t:herm}[Hermitian Singul{\"a}rstellensatz]
Let $h$ be a full matrix polynomial and let $f$ be an \nice atom. Then $\fl^{\re}(f)\subseteq\fl^{\re}(h)$ if and only if $f$ is stably associated to a factor of $h$.
\end{repthm}

A final main result is a Nullstellensatz for real free zero sets of polynomials with a distinguished quadratic term. As with the unsignatured hypothesis in Theorem \ref{t:herm} for real free loci, this was done under a certain definiteness assumption.

\begin{repthm}{t:null}
For a nonconstant hermitian $f\in \px$ assume $\{f\succ0\}\neq\emptyset$. Let $h\in\pxy$. Then
$\cV^{\re}(f-y^*y)\subseteq \cV^{\re}(h)$ if and only if $h\in(f-y^*y)$.
\end{repthm}

As a consequence we obtain a necessary and sufficient Positivstellensatz for hereditary quadratic polynomials, by adding a slack variable.

\begin{repcor}{c:posss}
Let $f\in\px$ be a nonconstant hermitian hereditary quadratic polynomial with $\{f\succ0\}\neq\emptyset$, and let $y$ be an auxiliary variable. If $h\in\px$, then $h|_{\{f\succeq0\}}\succeq0$ 
if and only if
$$h=f_0+\sum_j f_j^*f_j$$
for some $f_j\in \pxy$ with $f_0\in (f-y^*y)$.
\end{repcor}

While previously known necessary and sufficient Positivstellens\"atze on free semialgebraic sets do not require a slack variable, they only hold if the underlying free semialgebraic set is either convex with a nonempty interior \cite{HKM12} or given by quadratic equations, such as spherical isometries or tuples of unitaries \cite{KVV}. On the other hand, Corollary \ref{c:posss} is an example of a Positivstellensatz on a (possibly) non-convex free semialgebraic set with a nonempty interior.

\subsubsection{Linear Gleichstellensatz}
An affine matrix polynomial is traditionally called a linear pencil. It is indecomposable if it cannot be put in block triangular form with a left and right basis change, cf.~Definition \ref{d:ind}.
One can effectively apply the above Nullstellens\"atze to indecomposable linear pencils $L$, $M$ to get 
roughly: $M = P LQ$ for some $P, Q \in \GL_d(\C)$ if and  only if the the free loci of $L$ and $M$ coincide.
This is true for complex zeros (Theorem \ref{t:ind}), real zeros (Theorem \ref{c:penc0}) and in the context of the
 Hermitian Nullstellensatz, which requires extra conditions on zeros  (Theorem \ref{c:penc0}).

\subsubsection{Zero sets over the reals}

When dealing with real matrix polynomials, it suffices to consider only their evaluations at tuples of real matrices. Namely, for each $n\in\N$ we have a $*$-embedding $\iota:\mat{n}\hookrightarrow \opm_{2n}(\R)$ induced by $\alpha+\beta i\mapsto
\left(\begin{smallmatrix}\alpha & -\beta \\ \beta& \alpha\end{smallmatrix}\right)$. Note that $\iota(X)$ is unitarily equivalent to $X\oplus \overline{X}$ for each $X\in\mat{n}^g$, where $\overline{X}$ is the entry-wise complex conjugate of $X$. Hence if $f\in \R\!\Langle x,x^*\Rangle^{\de\times\de}$, then $f(X,X^*)$ is singular (resp. zero) if and only if $f(\iota(X),\iota(X)^\ti)$ is singular (resp. zero). Therefore one can replace free loci and free zero sets in the above theorems with their counterparts over $\R$ when applied to real polynomials, which is usually the preferred setting in control theory and optimization.

\subsection*{Acknowledgments}
The authors thank Scott McCullough for insightful discussions. The first two named authors thank the Mathematisches Forschungsinstitut Oberwolfach (MFO) for support through the ``Research in Pairs'' (RiP) program in 2017.


\section{Factorization in a free algebra and free loci}\label{sec2}

This section has two main results. 
In Theorem \ref{t:irr} we prove that a
matrix polynomial $f$ is an atom if and only if $\fl_n(f)$ is eventually a reduced irreducible hypersurface.
Theorem \ref{t:new} shows that  $f$ divides $h$ in the sense that every
atomic factor of $f$ is stably associated to a factor of $h$
if and only if $\fl(f)\subseteq\fl(h)$. 
These results  are far-reaching generalizations of \cite[Theorems A, B]{HKV} 
to matrix polynomials that eliminate
the assumption of $f(0)$ being invertible.
The proofs here rely on representation theory and point-centered ampliations, whereas
\cite{HKV} used invariant theory. For the sake of convenience and later sections we use $\C$ as the base field, but all proofs work for an arbitrary algebraically closed field of characteristic 0.

For $n\in\N$ let $\gX^n=(\gX_1^n,\dots,\gX_g^n)$ be a tuple of $n\times n$ generic matrices. That is,
$\gX_j^n=(\omega_{j\imath\jmath})_{\imath\jmath}$, where $\omega=(\omega_{j\imath\jmath}:\imath,\jmath,j)$ are independent commuting variables. We view the entries of $\gX^n$ as the coordinates of the affine space $\mat{n}^g$.

By \cite[Theorem 5.8.3]{Coh}, a full matrix $f\in\ax^{\de\times\de}$ is stably associated to a linear pencil $L=A_0+A_1x_1+\cdots A_gx_g$ of size $d$ such that
$$\begin{pmatrix}A_1&\cdots&A_g\end{pmatrix} \quad\text{and}\quad \begin{pmatrix}A_1\\\vdots\\A_g\end{pmatrix}$$
have full rank. We call such $L$ an {\bf \epic\footnote{
In \cite{Coh} such $L$ is called {\it monic}, which we avoid since monic pencils in control theory and convexity usually refer to pencils with $A_0=I$.	
} pencil}. We also say that $L$ is a {\bf linearization} of $f$. By the definition of stable associativity there is $\alpha\in\C\setminus\{0\}$ such that $\det f(\gX^n)=\alpha^n\det L(\gX^n)$ for all $n\in\N$. Also, $f$ is full if and only if $L$ is full by \cite[Theorem 7.5.13]{Coh}. Furthermore, $f$ is an atom if and only if $L$ is an atom by \cite[Proposition 0.5.2, Corollary 0.5.5 and Proposition 3.2.1]{Coh}.

\begin{exa}
If $f=x_1x_2-x_2x_1$, then one can check that the pencil
$$L=A_0+A_1x_1+A_2x_2=\begin{pmatrix}
-1 & 0 & x_2 \\
0 & -1 & x_1 \\
x_1 & -x_2 & 0
\end{pmatrix}$$
is a linearization of $f$. While $L$ is epic, no linear combination of $A_0,A_1,A_2$ is invertible, which corresponds to $f$ vanishing on $\C^2$.
\end{exa}

We  next record facts about full and invertible matrices over $\ax^{\de\times\de}$ that are scattered across the existing literature.
\begin{lem}\label{l:full}
For $f\in\ax^{\de\times\de}$  the following are equivalent:
\begin{enumerate}[\rm(i)]
\item $f$ is full;
\item there are $n\in\N$ and $X\in\mat{n}^g$ such that $\det f(X)\neq0$;
\item there exists $n_0\in\N$ such that $\det f(\gX^n)\neq0$ for every $n\ge n_0$.
\end{enumerate}
Furthermore, a full $f$ is not invertible in $\ax^{\de\times \de}$ if and only if there exists $n_0\in\N$ such that $\det f(\gX^n)$ is nonconstant for every $n\ge n_0$.
\end{lem}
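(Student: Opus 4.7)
The lemma has two parts: the three-way equivalence (i)--(iii), and a final statement characterizing invertibility of a full $f$. The easy implications come first. For (iii) $\Rightarrow$ (ii), the polynomial $\det f(\gX^n)$ is a nonzero element of $\C[\omega]$ and hence has nonvanishing specializations on a Zariski-dense subset of $\mat{n}^g$. For (ii) $\Rightarrow$ (i), I argue the contrapositive: if $f = f_1 f_2$ with $f_1 \in \ax^{\de \times \ve}$ and $f_2 \in \ax^{\ve \times \de}$ for some $\ve < \de$, then for every $X \in \mat{n}^g$ the product $f(X) = f_1(X) f_2(X)$ factors through a space of dimension $\ve n < \de n$, so $\rk f(X) < \de n$ and $\det f(X) = 0$.

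The heart of the proof is (i) $\Rightarrow$ (iii), and this is the main obstacle. Using the paragraph preceding the lemma, $f$ full is stably associated to an epic pencil $L$ that is also full and satisfies $\det f(\gX^n) = \alpha^n \det L(\gX^n)$ for a nonzero scalar $\alpha$, so it suffices to establish (iii) for such a pencil. Here I would invoke Cohn's structure theory of free ideal rings and of the universal skew field of fractions \cite{Coh}: fullness of $L$ over $\ax$ is equivalent to invertibility in the free skew field $\rx$, and by Amitsur's theorem $\rx$ embeds into the skew field of fractions of the generic matrix algebra for all $n$ sufficiently large. Under the corresponding evaluation, $L(\gX^n)$ remains an invertible matrix over $\C(\omega)$, so $\det L(\gX^n) \ne 0$ in $\C[\omega]$ for every such $n$.

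For the final statement, the forward implication is trivial: if $fg = I$ in $\ax^{\de\times\de}$, then $\det f(X)\det g(X) = 1$ for every $X$, so $\det f(\gX^n) \in \C^\times$ is a nonzero constant for every $n$. For the converse, suppose $f$ is full but not invertible; the epic linearization $L$ is then also full and not invertible, forcing positive size $d \ge 1$. The plan is to examine the top-degree part of $\det L(\gX^n) \in \C[\omega]$: a direct expansion of the determinant shows this top-degree part equals $\det L_0(\gX^n)$, where $L_0 = \sum_{i \ge 1} A_i x_i$ is the homogeneous linear part of $L$. A short homogeneity argument shows that the epic condition forces $L_0$ to be full over $\ax$---a factorization $L_0 = PQ$ of pencil type with a size drop would, after splitting into homogeneous pieces, yield a corresponding rank deficiency of $\begin{pmatrix} A_1&\cdots&A_g\end{pmatrix}$ or $\begin{pmatrix}A_1\\ \vdots\\ A_g\end{pmatrix}$, violating epicness. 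Applying the already-established implication (i) $\Rightarrow$ (iii) to $L_0$ then gives $\det L_0(\gX^n) \ne 0$ for large $n$, so $\det L(\gX^n)$ has degree exactly $dn \ge 1$ in $\omega$ and is therefore non-constant.
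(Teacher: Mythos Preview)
Your treatment of (iii)$\Rightarrow$(ii), (ii)$\Rightarrow$(i), and the forward direction of the final claim is correct and matches the paper. For (i)$\Rightarrow$(iii) your route via invertibility in $\rx$ is also the paper's, though the phrasing ``$\rx$ embeds into the skew field of fractions of the generic matrix algebra for all $n$ sufficiently large'' is imprecise: $\rx$ is not PI, so it embeds in no $\UD(n)$. What is true, and sufficient, is that each fixed element of $\rx$---in particular each entry of $L^{-1}$---is defined at the generic point $\gX^n$ once $n$ is large enough.

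The genuine gap is in the converse of the final statement. Your claim that the homogeneous linear part $L_0$ of an epic pencil $L$ is full is false. Take the paper's own example, the linearization of $x_1x_2-x_2x_1$:
\[
L=\begin{pmatrix}-1&0&x_2\\0&-1&x_1\\x_1&-x_2&0\end{pmatrix},
\qquad
L_0=\begin{pmatrix}0&0&x_2\\0&0&x_1\\x_1&-x_2&0\end{pmatrix}
=\begin{pmatrix}0&x_2\\0&x_1\\1&0\end{pmatrix}
\begin{pmatrix}x_1&-x_2&0\\0&0&1\end{pmatrix}.
\]
Here $L$ is epic (both $(A_1\ A_2)$ and its transpose analogue have rank $3$), yet $L_0$ factors through size $2$ and is not full; equivalently the $2\times 2$ upper-left zero block forces $\det L_0(\gX^n)=0$ for every $n$. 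The proposed homogeneity argument cannot be repaired: epicness only controls the row and column spans of the $A_j$, not how those spans interact, and it is precisely this interaction that governs fullness of $L_0$. In this example $\det L(\gX^n)$ has degree $2n$, not $3n$, so the top-degree strategy collapses.

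The paper's argument is different: pass to an atomic factor $h$ of $f$. If $h(0)$ is singular, then $\det h(\gX^n)$ has zero constant term but is nonzero for large $n$ by (i)$\Rightarrow$(iii), hence is nonconstant. If $h(0)$ is invertible, the earlier result \cite[Theorem~4.3]{HKV} applies directly.
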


\begin{proof}
By \cite[Corollary 7.5.14]{Coh}, $f$ is full if and only if $f$ is invertible over  the free skew field of noncommutative rational functions $\rx$ (see Subsection \ref{ss:prelim} for further information about this skew field). The equivalence of (i) and (ii) now follows from the construction of $\rx$ described in \cite[Section 2]{KVV2}. On the other hand, (iii) is equivalent to (ii) by linearization and \cite[Proposition 2.10]{DM}.

If $f$ is invertible in $\ax^{\de\times \de}$, then $\det f(\gX^n)\det f^{-1}(\gX^n)=1$ is a product of polynomials,  so $\det f(\gX^n)$ is a nonzero constant. If $f$ is not invertible, then either $f$ is not full or $f$ has an atomic factor $h$. If $h(0)=0$, then $\det h(\gX^n)$ is not constant for large enough $n$ by the first part since $h$ is full. If $h(0)\neq0$, then there exists $n_0\in\N$ such that $h(\gX^n)$ is nonconstant for all $n\ge n_0$ by \cite[Theorem 4.3]{HKV}.
\end{proof}

\subsection{Point-centered ampliations}

Fix $X\in\mat{n}^g$ and let
$$y=(y_{j\imath\jmath}\colon 1\le j\le g, 1\le \imath,\jmath\le n)$$
be $gn^2$ freely noncommuting variables. For $f\in\ax^{\de\times\de}$ let
$$f^X = f\left(
X_1+(y_{1\imath\jmath})_{\imath,\jmath},\dots,X_g+(y_{g\imath\jmath})_{\imath,\jmath}
\right)\in \ay{}^{{\de n}\times{\de n}}$$
be its {\bf point-centered ampliation} at $X$. In particular, if $L=A_0+\sum_{j>0}A_jx_j$ and $0_n\in\mat{n}^g$ is the zero tuple, then $L^{0_n}$ is up to a canonical shuffle equal to
\begin{equation}\label{e:30}
(I_n\otimes A_0)
+\sum_{j=1}^n\sum_{\imath,\jmath=1}^n(E_{\imath\jmath}\otimes A_j)y_{j\imath\jmath},
\end{equation}
where $\otimes$ is Kronecker's product and $E_{\imath\jmath}\in\mat{n}$ are the standard matrix units. 
For applications of related ideas to noncommutative rational functions see \cite{Vol18,PV}.

In this subsection we prove that point ampliations preserve atoms. First we require two technical lemmas.

\begin{lem}\label{l:sets}
For sets $S_1,\dots,S_n$ we have
$$\sum_{i=1}^n \left(|S_i|+\left|S_i\setminus\bigcup_{k\neq i}S_k \right|\right)
\ge 2\left|\bigcup_{i=1}^nS_i\right|.$$
\end{lem}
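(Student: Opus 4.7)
The plan is to prove the inequality by double-counting, reducing it to a pointwise estimate indexed by the elements of $U=\bigcup_{i=1}^n S_i$. For each $x\in U$, introduce the multiplicity
\[
m(x)=\bigl|\{i\colon x\in S_i\}\bigr|,
\]
so $m(x)\ge 1$ for every $x\in U$.

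First I would rewrite the first sum on the left-hand side by swapping the order of summation:
\[
\sum_{i=1}^n |S_i|=\sum_{x\in U} m(x).
\]
Next, observe that $x\in S_i\setminus \bigcup_{k\neq i}S_k$ precisely when $x$ lies in $S_i$ and in no other $S_k$, i.e.\ when $m(x)=1$ and $x\in S_i$. Hence each $x$ with $m(x)=1$ is counted in exactly one of the sets $S_i\setminus\bigcup_{k\neq i}S_k$, while elements with $m(x)\ge 2$ are never counted. Therefore
\[
\sum_{i=1}^n \Bigl|S_i\setminus\bigcup_{k\neq i}S_k\Bigr|
=\bigl|\{x\in U\colon m(x)=1\}\bigr|.
\]

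Combining the two identities, the left-hand side equals
\[
\sum_{x\in U}\bigl(m(x)+\mathbbm{1}_{[m(x)=1]}\bigr).
\]
Since each summand is at least $2$ (it equals $2$ when $m(x)=1$, and is at least $2$ when $m(x)\ge 2$), summing over $x\in U$ yields the claimed bound $2|U|$. The argument is a routine double count, so there is no real obstacle; the only point worth being careful about is ensuring that the contributions of high-multiplicity and singleton elements are both correctly tracked, which is handled cleanly by the Iverson-bracket formulation above.
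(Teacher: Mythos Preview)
Your proof is correct. The double-counting argument via the multiplicity function $m(x)$ is sound: both identities for $\sum_i |S_i|$ and $\sum_i |S_i\setminus\bigcup_{k\neq i}S_k|$ are right, and the pointwise bound $m(x)+\mathbbm{1}_{[m(x)=1]}\ge 2$ is immediate.

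The paper takes a different route: it argues by induction on $n$, using the auxiliary inequality
\[
|S_1\cap S_2|
+\Bigl|S_1 \setminus \bigcup_{k\neq 1}S_k\Bigr|
+\Bigl|S_2 \setminus \bigcup_{k\neq 2}S_k\Bigr|
\ge
\Bigl|(S_1\cup S_2) \setminus \bigcup_{k\neq 1,2}S_k\Bigr|
\]
to merge $S_1$ and $S_2$ into a single set and reduce to $n-1$ sets. Your approach is arguably cleaner: it avoids induction entirely and makes transparent exactly where equality holds (namely when every element has multiplicity $1$ or $2$). The paper's inductive step, by contrast, requires one to track how the ``unique-to-$S_i$'' terms change when two sets are merged, which is a bit more bookkeeping. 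Both arguments are short, but yours gets to the heart of the matter more directly.
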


\begin{proof}
Follows by induction on $n$ using
\[|S_1\cap S_2|
+\left|S_1 \setminus \bigcup_{k\neq 1}S_k\right|
+\left|S_2 \setminus \bigcup_{k\neq 2}S_k\right| 
\ge
\left|(S_1\cup S_2) \setminus \bigcup_{k\neq 1,2}S_k\right|.
\qedhere\]
\end{proof}

\begin{lem}\label{l:t1}
Let $A_0,\dots,A_g\in\mat{d}$ and $n,d'd''\in\N$ satisfy $d'+d''=nd$. Assume there exist $P_i\in\C^{d'\times d}$, and $Q_i\in\C^{d\times d''}$ for $i=1,\dots,n$ such that
$$\begin{pmatrix} P_1 & \cdots & P_n\end{pmatrix}\qquad \text{and}\qquad
\begin{pmatrix} Q_1 \\ \vdots \\ Q_n\end{pmatrix}$$
have full rank and
\begin{equation}\label{e:31}
\begin{split}
\sum_iP_iA_0Q_i &= 0\\
P_iA_jQ_k &= 0\qquad \text{for all}\quad 1\le i,k\le n,\ j>0.
\end{split}
\end{equation}
Then there exist $U\in \C^{e'\times d}$, $V\in \C^{d\times e''}$ of full rank for some $e',e''\in\N$ satisfying $e'+e''=d$ such that
\begin{equation}\label{e:32}
UA_jV = 0 \qquad \text{for all}\quad j\ge0.
\end{equation}
\end{lem}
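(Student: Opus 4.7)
My plan is to translate both the hypothesis and the conclusion into subspace language and then reduce via dimension counting, using Lemma~\ref{l:sets}.

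First observe that exhibiting $U,V$ satisfying \eqref{e:32} is equivalent to exhibiting subspaces $W,W'\subseteq\C^d$ of equal dimension with $A_jW\subseteq W'$ for every $j\geq0$. Indeed, given such a pair, take $V\in\C^{d\times\dim W}$ whose columns form a basis of $W$, and $U\in\C^{(d-\dim W')\times d}$ whose rows span a complement to $W'$; then $UA_jV=0$ for all $j$, and the integers $e''=\dim W$, $e'=d-\dim W'$ satisfy $e'+e''=d$. The converse is immediate with $W=\ran V$, $W'=\ker U$.

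Next, I extract the pointwise content of the hypothesis. Let $R_k=\ran Q_k$, $N_i=\ker P_i$, $R=\sum_k R_k$ and $N=\bigcap_i N_i$. The conditions $P_iA_jQ_k=0$ for $j>0$ immediately yield $A_jR\subseteq N$ for every $j>0$, so the candidate pair $(W,W')=(R,N)$ already handles the $j>0$ part. Moreover, the full-rank hypotheses on the stacked $P$'s and $Q$'s give, by subadditivity of rank, the inequalities $\sum_i\dim N_i\leq d''$ and $\sum_k\dim R_k\geq d''$, hinting at a dimensional balance between $R$ and $N$.

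The delicate task is to incorporate the global identity $\sum_iP_iA_0Q_i=0$, which is a single relation summed over $i$ rather than a pointwise statement. My approach is to apply Lemma~\ref{l:sets} to bases of the $R_k$'s (and dually to complements of the $N_i$'s) in order to quantify their mutual overlaps. The counting inequality produced will allow one to refine $R$ and $N$ to subspaces $W, W'$ of equal dimension on which the single identity telescopes to yield $P_iA_0v=0$ for every $i$ and every $v\in W$, hence $A_0W\subseteq N\subseteq W'$; combined with $A_jW\subseteq N\subseteq W'$ for $j>0$, this gives the needed $A_jW\subseteq W'$ for all $j\geq 0$. The matching of dimensions $\dim W=\dim W'$ emerges from the same counting.

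The principal obstacle is precisely this last step: converting the sum identity for $A_0$ into a pointwise statement on a subspace of adequate dimension, compatible with the pointwise data for $j>0$. The combinatorial form of Lemma~\ref{l:sets}, which counts multiplicities of elements across sets, mirrors the structure of overlapping $R_k$'s and therefore governs how the single global $A_0$-relation distributes across the decomposition $R=\sum_k R_k$; this makes Lemma~\ref{l:sets} the natural combinatorial engine for the argument.
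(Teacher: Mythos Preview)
Your translation to subspace language and the observation that $A_jR\subseteq N$ for $j>0$ are correct, and the instinct to invoke Lemma~\ref{l:sets} for a dimension count is the right one. The gap is in your treatment of $A_0$. You aim to produce $W\subseteq R$ with $P_iA_0v=0$ for \emph{every} $i$ and every $v\in W$, i.e.\ $A_0W\subseteq N=\bigcap_i\ker P_i$. But the single identity $\sum_iP_iA_0Q_i=0$ is far too weak for this, and nothing ``telescopes''. A concrete obstruction: take $n=2$, $d=2$, $d'=3$, $d''=1$, $A_0=I$, $A_j=0$ for $j>0$, $Q_1=e_1$, $Q_2=e_2$, and $P_1,P_2\in\C^{3\times2}$ with $P_1$ injective and $P_1e_1=-P_2e_2$ (so the sum relation holds and $(P_1\ P_2)$ has rank $3$). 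Then $N\subseteq\ker P_1=0$, so the only $W$ with $A_0W\subseteq N$ is $W=0$; yet the conclusion of the lemma holds nontrivially with $e'=e''=1$.

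The maneuver that actually works goes in the opposite direction: rather than shrinking $W$ to push $A_0W$ into the small space $N$, one \emph{enlarges} the target by restricting attention to a single index. For each $\ell$ let $\widehat{P}_\ell\in\mat{d'}$ be the projection onto a complement of $\ran P_\ell\cap\sum_{k\neq\ell}\ran P_k$ in $\ran P_\ell$, along $\sum_{k\neq\ell}\ran P_k$. Then $\widehat{P}_\ell P_i=0$ for $i\neq\ell$, so left-multiplying the sum relation by $\widehat{P}_\ell$ isolates one term: $\widehat{P}_\ell P_\ell A_0 Q_\ell=0$. Together with $P_\ell A_jQ_\ell=0$ for $j>0$ this yields $(\widehat{P}_\ell P_\ell)A_jQ_\ell=0$ for all $j\ge0$, so that both $U$ and $V$ come from the \emph{same} index $\ell$ and the relevant kernel is $\ker(\widehat{P}_\ell P_\ell)$, not $N$. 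The purpose of Lemma~\ref{l:sets} is then not to distribute the $A_0$ relation across the $R_k$, but to run the count $\sum_\ell(\rk\widehat{P}_\ell+\rk Q_\ell)+\sum_\ell(\rk P_\ell+\rk\widehat{Q}_\ell)\ge 2nd$ and thereby guarantee an index $\ell$ with $\rk\widehat{P}_\ell+\rk Q_\ell\ge d$ (or the dual inequality), so that one can extract $e'$ independent rows of $\widehat{P}_\ell P_\ell$ and $e''$ independent columns of $Q_\ell$ with $e'+e''=d$.
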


\begin{proof}
Since the statement is trivial for $n=1$, let $n\ge2$. By assumption we have $\C^{d'}=\sum_i\ran P_i$. For $1\le i\le n$ let $\widehat{P}_i\in\mat{d'}$ be the projection onto the complement of $\ran P_i\cap\sum_{k\neq i}\ran P_k$ in $\ran P_i$ along $\sum_{k\neq i}\ran P_k$. Analogously we define $\widehat{Q}_i\in\mat{d''}$ with respect to $\ran Q_i^\ti$.

By Lemma \ref{l:sets} we have
\begin{align*}
\sum_i(\rk P_i+\rk \widehat{P}_i)&\ge 2\dim\left(\sum_i\ran P_i\right)=2d',\\
\sum_i(\rk Q_i+\rk \widehat{Q}_i)&\ge 2\dim\left(\sum_i\ran Q_i^\ti\right)=2d''.
\end{align*}
Let $m_i=\rk \widehat{P}_i+\rk Q_i$ and $n_i=\rk P_i+\rk \widehat{Q}_i$. Then
$$\sum_im_i+\sum_in_i = \sum_i(\rk P_i+\rk \widehat{P}_i+\rk Q_i+\rk \widehat{Q}_i)\ge 2d'+2d''=2nd.$$
Therefore there exists $1\le i\le n$ such that $m_i\ge d$ or $n_i\ge d$.

Suppose $m_1\ge d$ (the other cases are treated analogously). Then \eqref{e:31} implies
$${\widehat{P_1}P_1}A_jQ_1=0 \qquad \text{for all}\quad j\ge0.$$
Since $\rk (\widehat{P_1}P_1)=\rk \widehat{P_1}$, there exist $e'\le \rk \widehat{P_1}$ and $e''\le \rk Q_1$ satisfying $e'+e''=d$. By choosing $e'$ linearly independent rows of $\widehat{P_1}P_1$ and $e''$ linearly independent columns of $Q_1$ we obtain $U\in \C^{e'\times d}$, $V\in \C^{d\times e''}$ of full rank such that \eqref{e:32} holds.
\end{proof}

\begin{defn}\label{d:ind}
A linear pencil $L$ of size $d$ is {\bf indecomposable} if there are no $P,Q\in\GL_d(\C)$ such that
$$PLQ=\begin{pmatrix}\star& 0 \\ \star & \star\end{pmatrix},$$
where the zero block is of size $d'\times d''$ with $d'+d''=d$.
\end{defn}

\begin{rem}
When restricted to monic pencils ($L(0)=I$), Definition \ref{d:ind} coincides with the one in \cite{HKMV} (while in \cite{HKV} such pencils were called irreducible): namely, a monic pencil is indecomposable if its coefficients admit no nontrivial common invariant subspace (or equivalently, generate the full matrix algebra). If $L(0)$ is invertible, the new definition can be reduced to the old one since $L$ is indecomposable if and only if $L(0)^{-1}L$ is indecomposable, and the latter is a monic pencil.

We will frequently use \cite[Theorem 5.8.8]{Coh} stating that an epic pencil is an atom if and only if it is indecomposable.
\end{rem}

\begin{prop}\label{p:t2}
Let $f$ be a matrix polynomial and $X\in \mat{n}^g$. If $f$ is an atom, then $f^X$ is an atom.
\end{prop}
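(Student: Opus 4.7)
The plan is to reduce the statement, via linearization and a change of origin, to showing that $L^{0_n}$ is an indecomposable epic pencil when $L$ is, and then to apply Lemma~\ref{l:t1}. By \cite[Theorem 5.8.3]{Coh} I would first replace $f$ by a stably associated epic linear pencil $L=A_0+\sum_j A_jx_j$, which is again an atom and therefore indecomposable by \cite[Theorem 5.8.8]{Coh}. Because point-centered ampliation is a ring homomorphism preserving direct sums and invertibility (since $(g_1 g_2)^X = g_1^X g_2^X$ and $(g^{-1})^X = (g^X)^{-1}$), $f^X$ is stably associated to $L^X$, so it suffices to show that $L^X$ is an atom. Moreover, the substitution $y_{j\imath\jmath}\mapsto y_{j\imath\jmath}+X_j(\imath,\jmath)$ defines an algebra automorphism of $\ay$ carrying $L^{0_n}$ to $L^X$, as one sees from~\eqref{e:30} together with the identity $\sum_{\imath,\jmath} E_{\imath\jmath}\,X_j(\imath,\jmath)=X_j$. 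I may therefore assume $X=0_n$ and prove $L^{0_n}$ is an atom.

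That $L^{0_n}$ is epic is immediate from~\eqref{e:30}: the sum of the column ranges of its linear coefficients is $\C^n\otimes\sum_j\ran(A_j)=\C^{nd}$ because $L$ is epic, and a symmetric calculation handles row spans. By \cite[Theorem 5.8.8]{Coh} it then remains to verify indecomposability, and this is the main obstacle---exactly what Lemma~\ref{l:t1} was engineered to deliver.

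Suppose toward a contradiction that there exist invertible $P,Q\in\GL_{nd}(\C)$ putting $PL^{0_n}Q$ into block-triangular form with a zero block of size $d'\times d''$, $d'+d''=nd$. Let $P'\in\C^{d'\times nd}$ consist of the top $d'$ rows of $P$ and $Q''\in\C^{nd\times d''}$ of the last $d''$ columns of $Q$; invertibility of $P$ and $Q$ forces both to have full rank. Write $P'=(P_1\mid\cdots\mid P_n)$ with $P_i\in\C^{d'\times d}$ and $Q''=(Q_1^\ti\mid\cdots\mid Q_n^\ti)^\ti$ with $Q_i\in\C^{d\times d''}$. Vanishing of the upper-right block of $PL^{0_n}Q$ is equivalent, coefficient-by-coefficient in the $y_{j\imath\jmath}$, to $P_\imath A_j Q_\jmath=0$ for $j>0$ and $\sum_i P_i A_0 Q_i=0$: precisely the hypotheses of Lemma~\ref{l:t1}. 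The lemma then produces full-rank $U\in\C^{e'\times d}$, $V\in\C^{d\times e''}$ with $e'+e''=d$ and $UA_jV=0$ for all $j\ge0$; extending $U,V$ to invertible matrices exhibits $L$ as decomposable, contradicting its atomicity. Hence $L^{0_n}$ is indecomposable, therefore an atom, and so is $f^X$.
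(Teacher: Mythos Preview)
Your proof is correct and follows essentially the same route as the paper: reduce to an epic pencil via stable associativity, then to $X=0_n$ via the affine automorphism of $\ay$, check epicness of $L^{0_n}$, and finally feed a putative block-triangular decomposition into Lemma~\ref{l:t1} to contradict indecomposability of $L$. If anything you are slightly more explicit than the paper in spelling out why point-centered ampliation preserves stable associativity and why $L^{0_n}$ is epic.
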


\begin{proof}
Let $L$ be an \epic pencil that is stably associated to $f$. Then $L^X$ is stably associated to $f^X$ for every $X\in\mat{n}^g$ and $n\in\N$. Since stable associativity preserves atoms, $L$ is an atom and it suffices to show that every point-centered ampliation of $L$ is an atom. Furthermore, an affine change of variables preserves atoms, so it suffices to consider point-centered ampliations at $X=0_n$ for $n\in\N$, where $0_n$ is the zero tuple in $\mat{n}^g$.

Let $L=A_0+\sum_jA_jx_j$ be an \epic pencil of size $d$. Then $L^{0_n}$ is an \epic pencil of size $nd$ and up to a basis change equal to \eqref{e:30}. Suppose that $L^{0_n}$ is not an atom. Since it is epic, it is not indecomposable by \cite[Theorem 5.8.8]{Coh}, so there exist $d',d''\in\N$ satisfying $d'+d''=nd$ and $P_0,Q_0\in\GL_{nd}(\C)$ such that
\begin{equation}\label{e:33}
P_0L^{0_n}Q_0=\begin{pmatrix}\star& 0 \\ \star & \star\end{pmatrix},
\end{equation}
where the zero block is of size $d'\times d''$. Let $(P_1\ \cdots\ P_n)$ be the first $d'\times (nd)$ block row of $P_0$, and let $(Q_1^\ti \ \cdots\  Q_n^\ti)^\ti$ be the last $(nd)\times d''$ block column of $Q_0$. Then the assumptions of Lemma \ref{l:t1} are satisfied by \eqref{e:33} and \eqref{e:30}. So there exist $P,Q\in \GL_d(\C)$ such that
$$PLQ=\begin{pmatrix}\star& 0 \\ \star & \star\end{pmatrix},$$
where the zero block is of size $e'\times e''$ and $e'+e''=d$. Therefore $L$ is not indecomposable and hence not an atom by \cite[Theorem 5.8.8]{Coh}.
\end{proof}

\subsection{Irreducibility}

In this section we prove the main irreducibility result on free loci (Theorem \ref{t:irr}). 
We start with an observation about the degrees of determinants of a matrix polynomial; for a related rank-stabilizing result see \cite[Theorem 1.8]{DM}.

\begin{lem}\label{l:deg}
Let $f$ be a full non-invertible matrix polynomial. Then there exist $n_0,d\in\N$ such that $\deg\det f(\gX^n)=dn$ for all $n\ge n_0$.
\end{lem}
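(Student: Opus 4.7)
The plan is to reduce via a linearization to the case of an epic linear pencil, identify the top-degree part of its determinant, and exploit the epic hypothesis to show this part is nonvanishing. By the discussion preceding Lemma~\ref{l:full}, $f$ is stably associated to an epic linear pencil $L = A_0 + \sum_{j=1}^g A_j x_j$ of some size $d$, with $\det f(\gX^n) = \alpha^n \det L(\gX^n)$ for a nonzero scalar $\alpha$. Hence it suffices to prove $\deg\det L(\gX^n) = dn$ for all sufficiently large $n$.

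Each entry of the $dn \times dn$ matrix $L(\gX^n)$ is affine in the coordinates $\omega$, so $\det L(\gX^n)$ has total degree at most $dn$, and a direct inspection of the Leibniz expansion identifies its homogeneous component of top degree $dn$ as $\det L_0(\gX^n)$, where $L_0 := L - L(0) = \sum_j A_j x_j$ denotes the linear part. Consequently, $\deg\det L(\gX^n) = dn$ precisely when $\det L_0(\gX^n) \neq 0$, which by Lemma~\ref{l:full} holds for all sufficiently large $n$ as soon as $L_0$ is full.

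The key step, and the main obstacle, is to deduce fullness of $L_0$ from the epic hypothesis. I would argue by contradiction: if $L_0$ is not full, write $L_0 = PQ$ with $P \in \ax^{d\times e}$, $Q \in \ax^{e\times d}$, $e < d$; since $\ax$ is a free ideal ring and $e < d$, the right $\ax$-module map $Q\colon \ax^d \to \ax^e$ has nontrivial kernel, so there is some $w = (w_1,\dots,w_d)^\ti \in \ax^d$, $w \neq 0$, with $L_0 w = PQw = 0$. Expanding each $w_k = \sum_u c^k_u \, u$ over the monomial basis of $\ax$, the identity $L_0 w = 0$ forces the coefficient of every monomial $x_j u$ in each component $(L_0 w)_i$ to vanish; a short computation shows this coefficient equals $(A_j c_u)_i$, where $c_u := (c^1_u,\dots,c^d_u)^\ti \in \C^d$. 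Thus $c_u \in \bigcap_j \ker A_j$ for every word $u$. But the epic hypothesis asserts that the vertical stack of $A_1,\dots,A_g$ has rank $d$, which is equivalent to $\bigcap_j \ker A_j = \{0\}$. So $c_u = 0$ for every $u$, hence $w = 0$ -- a contradiction. Therefore $L_0$ is full, and $\deg\det L(\gX^n) = dn$ for all $n \ge n_0$.
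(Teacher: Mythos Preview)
Your argument contains a genuine error at the ``key step'': the claim that an $e\times d$ matrix $Q$ over $\ax$ with $e<d$ must have nontrivial right kernel is \emph{false}. The free algebra is not commutative, and column vectors over it behave very differently from vectors over a field or a commutative domain. Already $Q=(x_1\ x_2)\in\ax^{1\times 2}$ is injective as a right module map $\ax^2\to\ax$: if $x_1w_1+x_2w_2=0$ then comparing leftmost letters forces $w_1=w_2=0$. So the contradiction you derive never materializes.

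Worse, the conclusion you are aiming for --- that $L_0=\sum_jA_jx_j$ is full whenever the epic rank conditions hold --- is itself false, and hence so is the identification of $d$ in the Lemma with the size of $L$. Take $g=2$, $d=3$, and
\[
A_1=\begin{pmatrix}1&0&0\\0&0&1\\0&0&0\end{pmatrix},\qquad
A_2=\begin{pmatrix}0&1&0\\0&0&0\\0&0&1\end{pmatrix}.
\]
One checks directly that $(A_1\ A_2)$ and $\left(\begin{smallmatrix}A_1\\A_2\end{smallmatrix}\right)$ both have rank $3$, so any pencil $L=A_0+A_1x_1+A_2x_2$ is epic. However
\[
L_0=\begin{pmatrix}x_1&x_2&0\\0&0&x_1\\0&0&x_2\end{pmatrix}
=\begin{pmatrix}1&0\\0&x_1\\0&x_2\end{pmatrix}
\begin{pmatrix}x_1&x_2&0\\0&0&1\end{pmatrix}
\]
is a $3\times2$ times $2\times3$ factorization, so $L_0$ is not full. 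Moreover the right factor here has trivial kernel over $\ax$, exhibiting exactly the failure above. Taking $A_0=I$ gives a full non-invertible epic pencil $L$ with $\det L(\gX^n)=\det(I+\gX_1^n)\det(I+\gX_2^n)$, hence $\deg\det L(\gX^n)=2n$, not $3n$. Thus the integer $d$ in the Lemma is \emph{not} the size of an epic linearization in general, and the reduction you propose cannot succeed as stated.

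The paper's proof avoids this pitfall entirely: it never tries to identify $d$ a priori. Instead it exploits superadditivity $d_{n'}+d_{n''}\le d_{n'+n''}$ together with the linearity $d_{k(m+i)}=c_ik$ obtained from point-centered ampliations at invertible points (via \cite[Lemma 3.1]{HKV}) for two consecutive sizes $m,m+1$, and extracts the common slope by an arithmetic argument.
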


\begin{proof}
For $n\in\N$ let $d_n = \deg\det f(\gX^n)$. Since $f(\gX^{n'}\oplus \gX^{n''})=f(\gX^{n'})\oplus f(\gX^{n''})$,
we have
\begin{equation}\label{e:ineq}
d_{n'}+d_{n''}\le d_{n'+n''}
\end{equation}
for all $n',n''\in\N$. By Lemma \ref{l:full} there exist $m\in\N$ and $X^i\in\mat{m+i}^g$ for $i=0,1$ such that $\det f^{X^i}(0)=\det f(X^i)\neq0$. Then there exist $k_0,c_i\in\N$ such that
\begin{equation}\label{e:dkm}
d_{k(m+i)}=\deg \det f^{X^i}(\gX^k)=c_i k
\end{equation}
for all $k\ge k_0$ by \cite[Lemma 3.1]{HKV} and linearization.
Choosing $k=m(m+1)k_0$ yields $mc_1=(m+1)c_0$. Furthermore, for all $n\in\N$ we have
$$mk_0 d_n\le d_{mk_0n}=c_0k_0n$$
by \eqref{e:ineq} and \eqref{e:dkm}. On the other hand, every $n\ge m(2k_0+m-1)$ can be written as $n=k_1m+k_2(m+1)$ for some $k_1,k_2\ge k_0$, so \eqref{e:ineq} implies
\begin{align*}
(m+1)c_0n
&= (m+1)c_0k_1m+mc_1k_2(m+1) \\
&= m(m+1)(c_0k_1+c_1k_2)\\
&= m(m+1)(d_{k_1m}+d_{k_2(m+1)})\\
&\le m(m+1)d_n.
\end{align*}
Hence $d_n=\frac{c_0}{m}n$ for all $n\ge m(2k_0+m-1)$, and consequently $\frac{c_0}{m} \in\N$.
\end{proof}

The next irreducibility theorem is one of the central results in this paper. It is the pillar of the Hilbert type Nullstellens\"atze that follow in Sections \ref{sec2} and \ref{sec3}.

\begin{thm}\label{t:irr}
Let $f$ be a matrix polynomial. Then $f$ is an atom if and only if there is $n_0\in\N$ such that $\det f(\gX^n)$ is an irreducible polynomial for every $n\ge n_0$.
\end{thm}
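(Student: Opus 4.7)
For the easy direction $(\Leftarrow)$, if $\det f(\gX^n)$ is irreducible for every $n\ge n_0$, then it is in particular nonconstant, so Lemma \ref{l:full} shows that $f$ is full and not invertible. A nontrivial factorization $f=gh$ into non-invertibles would give $\det f(\gX^n)=\det g(\gX^n)\det h(\gX^n)$ with both factors nonconstant for all sufficiently large $n$ (again by Lemma \ref{l:full}), contradicting irreducibility. So $f$ is an atom.

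The main direction $(\Rightarrow)$ proceeds in three stages. First, I reduce to the case that $f=L$ is an \epic atomic (equivalently, indecomposable) pencil, by passing to the linearization stably associated to $f$; then $\det f(\gX^n)=\alpha^n\det L(\gX^n)$ for some $\alpha\in\C\setminus\{0\}$, so irreducibility transfers between $L$ and $f$. Second, by Lemma \ref{l:full} I pick $m\in\N$ and $X\in\mat{m}^g$ with $\det L(X)\neq0$. Proposition \ref{p:t2} guarantees that the point-centered ampliation $L^X$ is still an atom, and by construction $L^X(0)=L(X)$ is invertible. Hence \cite[Theorem 4.3]{HKV} applies to $L^X$ and yields $k_0\in\N$ such that $\det L^X(\tilde\gX^k)$ is irreducible for every $k\ge k_0$.

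The third stage unwinds the ampliation: via the identification $\mat{m}\otimes\mat{k}\cong\opm_{mk}(\C)$, evaluating the expression \eqref{e:30} at the $gm^2$-tuple $\tilde\gX^k$ of generic $k\times k$ matrices recovers $L(X\otimes I_k+Z)$, where $Z$ is, after a linear relabelling of the $gm^2k^2$ commuting variables, precisely a $g$-tuple of generic $mk\times mk$ matrices. Thus $\det L^X(\tilde\gX^k)$ and $\det L(\gX^{mk})$ differ only by an invertible affine change of coordinates, so irreducibility passes to $\det L(\gX^{mk})$ for every $k\ge k_0$. The remaining and most delicate task is to upgrade irreducibility from the arithmetic progression $\{mk:k\ge k_0\}$ to all sufficiently large $n$. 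My plan, mirroring the strategy of Lemma \ref{l:deg}, is to perform the ampliation at both sizes $m$ and $m+1$ (both feasible by Lemma \ref{l:full}), yielding irreducibility along $\{mk\}\cup\{(m+1)k'\}$, and then to fill in the remaining $n$ via a Bertini- or specialization-type argument along block-diagonal embeddings $\gX^{ma}\oplus\gX^{(m+1)b}\hookrightarrow\gX^n$ with $n=ma+(m+1)b$, combined with the sharp degree formula $\deg\det L(\gX^n)=dn$ of Lemma \ref{l:deg} to exclude any nontrivial factorization on degree grounds. Executing this last step cleanly is the main obstacle, since a hypothetical factor of $\det L(\gX^n)$ for such an $n$ must specialize compatibly with the already-established irreducibility along two coprime arithmetic progressions.
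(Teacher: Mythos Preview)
Your $(\Leftarrow)$ direction and the first two stages of $(\Rightarrow)$ are correct and essentially identical to the paper's argument: reduce to a linearization (harmless, though the paper skips it), ampliate at a point $X\in\mat{m}^g$ with $\det f(X)\neq0$, invoke Proposition~\ref{p:t2} and \cite[Theorem 4.3]{HKV}, and conclude that $\det f(\gX^{km})$ is irreducible for all $k\ge k_0$.

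The gap is squarely in your third stage. You propose to run the ampliation at two coprime sizes $m$ and $m+1$ and then ``fill in'' via block-diagonal specializations and degree bookkeeping, but you do not carry this out, and it is not clear that it can be made to work. The difficulty is that specializing $\gX^n$ to $\gX^{ma}\oplus\gX^{(m+1)b}$ only tells you that a hypothetical factor $p$ of $\det f(\gX^n)$ restricts to one of four possibilities (a constant, one irreducible factor, the other, or both), and different decompositions $n=ma+(m+1)b$ need not pin down $\deg p$: for each specialization you only get a \emph{lower} bound $\deg p\ge\deg\bar p$, and nothing forces $\bar p$ to be nonconstant. Juggling several such specializations simultaneously does not obviously close the argument.

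The paper resolves this with a completely different idea that you are missing: \emph{conjugation invariance}. Since $\det f(\gX^n)$ is invariant under simultaneous $\GL_n$-conjugation, so is any factor $p$; by \cite[Lemma 2.1]{HKV} this forces $p$ to be a \emph{pure trace polynomial} $t$, i.e.\ a polynomial in the symbols $\tr(w)$ for words $w$. The paper then uses a \emph{single} ampliation size $m$. For $n=km+r$ with $m\le r<2m$ and $k\ge\max\{2d,k_0\}$, one specializes $\gX^n$ to $X\oplus\gX^{km}$ with $\det f(X)\neq0$; since $\det f(X)\det f(\gX^{km})$ is irreducible, one may assume the specialization $\bar p$ is constant, whence the degree formula of Lemma~\ref{l:deg} gives $\deg p\le dr<2dm\le km$. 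Now comes the punchline: shifting each $\tr(w)$ in $t$ by the scalar $\tr(w(X))$ produces a pure trace polynomial $u$ of the same degree with $u(\gX^{km})=\bar p=\text{const}$, so $u-\text{const}$ is a trace identity for $km\times km$ matrices of degree $<km+1$. Procesi's theorem \cite[Theorem~4.5]{Pro} says there are no nontrivial trace identities of degree $\le km$, so $u$ is constant, hence $\deg t=0$ and $p$ is constant. This trace-polynomial/Procesi step is the genuine content you are lacking; a Bertini-type argument alone does not supply it.
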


\begin{proof}
Assume $f$ is not an atom. If $f$ is not full, then $\det f(\gX^n)=0$ for all $n$. If $f$ is invertible over $\ax$, then $\det f(\gX^n)$ is a nonzero constant for every $n$. If $f$ is full and not invertible, then $f=f_1f_2$ for some full non-invertible matrix polynomials $f_1,f_2$. Then $\det f(\gX^n)=\det f_1(\gX^n)\det f_2(\gX^n)$ is a proper factorization for all sufficiently large $n$ by Lemma \ref{l:full}. This proves one implication of the theorem.

Now let $f$ be an atom. By Lemma \ref{l:deg} there exist $m,d\in\N$ such that $\deg\det f(\gX^n)=dn$ for all $n\ge m$. In particular, there is $X\in\mat{m}^g$ such that $\det f(X)\neq0$. Then $f^X$ is an atom by Proposition \ref{p:t2}. Since $\det f^X(0) = \det f(X)\neq0$, there exists $k_0\in\N$ such $\det f^X(\gX^k)$ is an irreducible polynomial for every $k\ge k_0$ by \cite[Theorem 4.3]{HKV}. Since an affine change of variables does not affect irreducibility, $\det f^{0_m}(\gX^k)$ is also irreducible for every $k\ge k_0$. By the definition of $f^{0_m}$ we then conclude that $\det f(\gX^{km})$ is irreducible for all $k\ge k_0$.

Now let $n\ge (\max\{2d,k_0\}+1)m$. Then $n=km+r$ for $k\ge \max\{2d,k_0\}$ and $m\le r<2m$. Suppose that $\det f(\gX^n)=pq$ for $p,q\in\C[\gx]$. By the choice of $m$ in the previous paragraph there is $X\in\mat{r}^g$ such that $\det f(X)\neq0$. Since the polynomial $\det f (X\oplus \gX^{km})=\det f (X)\det f(\gX^{km})$ is irreducible, we can without loss of generality assume that $p$ evaluated at $X\oplus \gX^{km}$ equals $1$. Thus $\deg q\ge dkm$, and so $\deg p\le dn-dkm=dr<2dm$.

By \cite[Lemma 2.1]{HKV}, $p$ is given by a pure trace polynomial; that is, there is a formal polynomial $t$ in traces of words over $x$,
\begin{equation}\label{e:tr}
t=\sum_{i=1}^k \alpha_i \prod_{j=1}^{\ell_i}\tr(w_{ij}),
\end{equation}
such that the degree of $t$ equals the degree of $p$ and $p=t(\gX^{n})$. We also consider  another pure trace polynomial
\begin{equation}\label{e:tr1}
u=\sum_{i=1}^k \alpha_i \prod_{j=1}^{\ell_i}\big(\tr(w_{ij})+\tr(w_{ij}(X))\big).
\end{equation}
Note that $\deg u=\deg t$ and $u(\gX^{(km)})=p(X\oplus \gX^{km})=1$. Hence $u-1$ is a pure trace identity for $(km)\times (km)$ matrices which has degree $\deg p <2dm\le km+1$. Therefore $u-1$ is the zero polynomial by \cite[Theorem 4.5]{Pro}. Since $\deg u=\deg t$, the trace polynomial $t$ is also zero, so $p$ is constant. Hence $\det f(\gX^n)$ is irreducible for every $n\ge (\max\{2d,k_0\}+1)m$.
\end{proof}

\subsection{Complex Nullstellensatz}

In this subsection we show that indecomposable pencils are determined by their free loci (Theorem \ref{t:ind}), which then leads to the geometric reformulation of factorization in the free algebra (Theorem \ref{t:new}).

Consider the actions of $\SL_d(\C)\times \SL_d(\C)$ and $\GL_d(\C)\times \GL_d(\C)$ on $\mat{d}^{g+1}$ given by $(P,Q)\cdot X=PXQ^{-1}$. The following fact is probably well-known to specialists in invariant theory. We include a proof for the sake of completeness.

\begin{lem}\label{l:closed}
Let $L=A_0+\sum_{j>0}A_jx_j$ be of size $d$. If $L$ is indecomposable, then the $\SL_d(\C)\times \SL_d(\C)$-orbit of $A=(A_0,\dots,A_g)$ in $\mat{d}^{g+1}$ is Zariski closed, and the $\GL_d(\C)\times \GL_d(\C)$-stabilizer of $A$ is $\{(\alpha I,\alpha^{-1}I)\colon \alpha\in\C\setminus\{0\}\}$.
\end{lem}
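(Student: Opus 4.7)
My plan is to treat the stabilizer and closedness assertions separately, each via a case analysis that reduces to Definition \ref{d:ind}. Throughout I view $(A_0,\dots,A_g)$ as a representation of a quiver with two vertices (both carrying $\C^d$) and $g+1$ arrows, and I use freely that an indecomposable pencil must be nonzero.

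For the stabilizer, a pair $(P,Q)\in\GL_d(\C)\times\GL_d(\C)$ fixing $A$ corresponds (upon unwinding the action) to invertible operators $\phi,\psi\in\GL_d(\C)$ satisfying the intertwining $\psi A_j=A_j\phi$ for all $j$. Iterating yields $p(\psi)A_j=A_jp(\phi)$ for every polynomial $p$, and in particular $A_jG_\lambda(\phi)\subseteq G_\lambda(\psi)$ for each $\lambda\in\C$, where $G_\lambda(\cdot)$ denotes the generalized eigenspace. I would first rule out that $\phi$ has two distinct eigenvalues. If $\phi$ does, then for an eigenvalue $\lambda$ the subspace $U=G_\lambda(\phi)$ satisfies $0<\dim U<d$ and $A_jU\subseteq V$ for $V=G_\lambda(\psi)$; a case analysis on $\dim V$ (equal to $\dim U$, strictly smaller, strictly larger, or zero) — possibly after passing to the complementary generalized eigenspaces — always produces a pair of subspaces of equal positive dimension less than $d$ intertwined by the $A_j$, violating Definition \ref{d:ind}. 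Symmetrizing in $\psi$ forces $\psi$ also to have a unique eigenvalue, and since at least one $A_j$ is nonzero the two unique eigenvalues must coincide at some $\alpha\in\C^\times$. Writing $\phi=\alpha I+N_\phi$ and $\psi=\alpha I+N_\psi$ with $N_\phi,N_\psi$ nilpotent, the intertwining reduces to $N_\psi A_j=A_jN_\phi$. A preliminary symmetric argument (comparing $\ran N_\phi^{m_\psi}$ with $\ran N_\psi^{m_\psi}=0$) yields equal nilpotency indices $k$, and then replaying the dichotomy with $\ran N_\phi^{k-1},\ran N_\psi^{k-1}$ (or, in the dual subcase, with $\ker N_\phi^{k-1},\ker N_\psi^{k-1}$) forces $k=1$, i.e., $N_\phi=N_\psi=0$. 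Hence $\phi=\psi=\alpha I$, giving the stated stabilizer.

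For closedness of the $\SL_d(\C)\times\SL_d(\C)$-orbit I would apply the Hilbert--Mumford criterion: the orbit is Zariski closed iff no nontrivial one-parameter subgroup $\mu=(\mu_1,\mu_2)\colon\C^\times\to\SL_d(\C)\times\SL_d(\C)$ produces a limit $\lim_{t\to 0}\mu(t)\cdot A\in\mat{d}^{g+1}$. After simultaneous diagonalization $\mu_i(t)=\operatorname{diag}(t^{a_{i,1}},\dots,t^{a_{i,d}})$ with the trace-zero constraints $\sum_j a_{i,j}=0$, existence of the limit translates into $A_kF^c_s\subseteq F^c_t$ for every $c$ and every $k$, where
\[
F^c_s=\spa\{e_j : a_{1,j}\ge c\},\qquad F^c_t=\spa\{e_i : a_{2,i}\ge c\}.
\]
These trace-zero conditions give $\sum_c(\dim F^c_s-\dim F^c_t)=0$. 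A short case analysis at a well-chosen level $c$ then yields a violation of Definition \ref{d:ind}: either $\dim F^c_s=\dim F^c_t$ lies strictly between $0$ and $d$ and the pair $(F^c_s,F^c_t)$ directly decomposes $L$; or $\dim F^c_s>\dim F^c_t$ with $\dim F^c_s<d$, and enlarging $F^c_t$ to a subspace of dimension $\dim F^c_s$ containing it provides the decomposition; or $\dim F^c_s=d$ with $0<\dim F^c_t<d$, in which case $\ran A_k\subseteq F^c_t$ for all $k$ and any source subspace of dimension $\dim F^c_t$ gives the desired sub-pencil (the degenerate subcase $\dim F^c_t=0$ would force all $A_k=0$, contradicting that $L$ is nonzero). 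The symmetric case $\dim F^c_s<\dim F^c_t$ is handled analogously. Since no nontrivial $\mu$ survives, the orbit is closed.

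The main obstacle I anticipate is the bookkeeping of the case analysis in both parts, particularly checking that the degenerate configurations ($0$- or full-dimensional generalized eigenspaces and filtration pieces) always either reduce to the main case or contradict nonzeroness of $L$. For the closedness argument, the key leverage point is that the identity $\sum_c(\dim F^c_s-\dim F^c_t)=0$ together with nontriviality of $\mu$ guarantees a level $c$ at which the source and target filtration pieces are incompatible with indecomposability, no matter how the weights $\{a_{i,j}\}$ are distributed between source and target.
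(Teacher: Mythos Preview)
Your approach is correct and genuinely different from the paper's. The paper invokes King's $\theta$-stability for quiver representations \cite{Kin} and then a result of Shmelkin \cite{Shm}: it checks that indecomposability of $L$ in the sense of Definition~\ref{d:ind} is precisely $(1,-1)$-stability of the associated Kronecker quiver representation, and then reads off both the stabilizer and the closed-orbit statement from general GIT. Your argument, by contrast, is self-contained: the stabilizer claim is essentially a bare-hands proof of Schur's lemma for this quiver (via generalized eigenspaces and nilpotent parts), and closedness is obtained from the one-parameter-subgroup criterion with an explicit filtration analysis. The paper's route is shorter and situates the lemma in the GIT framework; yours avoids citing \cite{Kin,Shm} and makes transparent exactly how Definition~\ref{d:ind} is used.

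A few small points to tighten. First, your source/target indices are swapped: since the action is $(P,Q)\cdot A=PAQ^{-1}$, the source filtration should be built from the weights of $\mu_2$ and the target from $\mu_1$; this is purely cosmetic. Second, ``symmetrizing in $\psi$'' is not literally symmetric because the intertwining $\psi A_j=A_j\phi$ is asymmetric; the cleanest way to justify it is to pass to the transposed pencil $L^\ti$ (which is indecomposable iff $L$ is) and rerun the argument with $A_j^\ti$, $\phi^\ti$, $\psi^\ti$. Third, the criterion you invoke for closedness is really the Kempf/Birkes extension of Hilbert--Mumford (an orbit of a reductive group is closed iff every one-parameter subgroup whose limit exists lands back in the orbit), not the classical instability statement; this is fine, just name it correctly. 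Finally, your last ``symmetric case $\dim F_s^c<\dim F_t^c$'' is unnecessary: the identity $\sum_c(\dim F_s^c-\dim F_t^c)=0$ already guarantees that either all differences vanish (handled by nontriviality of $\mu$) or some level has $\dim F_s^c>\dim F_t^c$, which your other cases cover.
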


\begin{proof}
Since $L$ is indecomposable, we have
$$\dim\left(\sum_{j=0}^g A_jV\right)-\dim V>0$$
for every proper subspace $V$ in $\C^d$. If we view $A$ as a $(d,d)$-dimensional representation
\[
\begin{tikzcd}
\C^d \arrow[r,bend left=50,"A_0"]
\arrow[r,bend left=20,"A_1"] 
\arrow[r,bend right=2,phantom,swap,"\vdots"]
\arrow[r,bend right=40,swap,"A_g"] & \C^d
\end{tikzcd}
\]
of the $(g+1)$-Kronecker quiver, then the previous observation implies that $A$ is $(1,-1)$-stable according to \cite[Definition 1.1 and Section 3]{Kin}. By \cite[Proposition 3.1]{Kin}, stability of $A$ as a quiver representation is equivalent to stability of $A$ as a point in $\mat{d}^{g+1}$ with the action of $\GL_d(\C)\times \GL_d(\C)$ according to \cite[Definition 2.1]{Kin}. Note that $\Delta=\{(\alpha I,\alpha^{-1}I)\colon \alpha\in\C\setminus\{0\}\}$ are precisely elements in $\GL_d(\C)\times \GL_d(\C)$ that act trivially on $\mat{d}^{g+1}$. Therefore stability of $A$ implies $$(\GL_d(\C)\times \GL_d(\C))\cdot A = \dim (\GL_d(\C)\times \GL_d(\C))/\Delta,$$
so the stabilizer of $A$ equals $\Delta$. Furthermore, since $\SL_d(\C)\times \SL_d(\C)$ is the commutator subgroup of $\GL_d(\C)\times \GL_d(\C)$, the $\SL_d(\C)\times \SL_d(\C)$-orbit of $A$ in $\mat{d}^{g+1}$ is Zariski closed by \cite[Theorem 1(i)$\Rightarrow$(iii)]{Shm}.
\end{proof}

\begin{thm}[Linear Gleichstellensatz]\label{t:ind}
Let $L$ and $M$ be indecomposable linear pencils of sizes $d$ and $e$, respectively. Then $\fl(L)=\fl(M)$ if and only if $d=e$ and $M=PLQ$ for some $P,Q\in\GL_d(\C)$.
\end{thm}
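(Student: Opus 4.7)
The easy direction is immediate: $M = PLQ$ with $P, Q \in \GL_d(\C)$ yields $\det M(X) = (\det P)(\det Q)\det L(X)$, hence $\fl(M) = \fl(L)$.

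For the converse, assume $\fl(L) = \fl(M)$. First, an indecomposable pencil $L$ is necessarily epic: if $(A_1 \mid \cdots \mid A_g)$ were row-rank deficient, a left kernel vector combined with a suitable change of basis would produce the block triangular form forbidden by Definition \ref{d:ind}; the column case is symmetric. By \cite[Theorem 5.8.8]{Coh}, epic indecomposable pencils are atoms, so both $L$ and $M$ are atoms. Theorem \ref{t:irr} then gives that $\det L(\gX^n)$ and $\det M(\gX^n)$ are irreducible for all sufficiently large $n$, and since these define the same hypersurface $\fl_n(L) = \fl_n(M)$, we have $\det L(\gX^n) = c_n \det M(\gX^n)$ for some $c_n \in \C^{\times}$. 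Running the proof of Lemma \ref{l:deg} at a point $X \in \mat{m}^g$ with $\det L(X) \neq 0$ (provided by Lemma \ref{l:full}) and applying \cite[Lemma 3.1]{HKV} to the point-centered ampliation $L^X$ — an atom with invertible constant term by Proposition \ref{p:t2} — yields $\deg \det L(\gX^n) = dn$ for all large $n$. Symmetrically $\deg \det M(\gX^n) = en$, so $d = e$.

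Now view $L$ and $M$ via their coefficient tuples $A = (A_0, \ldots, A_g)$ and $B = (B_0, \ldots, B_g)$ in $\mat{d}^{g+1}$, carrying the action $(P, Q) \cdot (X_j)_j = (P X_j Q^{-1})_j$. Specializing $\gX^{n+m}$ to the block-diagonal $\gX^n \oplus \gX^m$ in $\det L(\gX^{n+m}) = c_{n+m} \det M(\gX^{n+m})$ and using $\det L(\gX^n) = c_n \det M(\gX^n)$ gives the multiplicative law $c_{n+m} = c_n c_m$ for all sufficiently large $n, m$; an elementary functional-equation argument extracts $c_n = \gamma^n$ for a single $\gamma \in \C^{\times}$. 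Choosing $\lambda \in \C$ with $\lambda^d = \gamma$ and replacing $B$ by $\lambda B = (\lambda I, I) \cdot B$ produces $\det L_A(\gX^n) = \det L_{\lambda B}(\gX^n)$ for all large $n$. The direct calculation $L_{(P,Q) \cdot A}(Y) = (P \otimes I_n) L_A(Y) (Q \otimes I_n)^{-1}$ for $Y \in \mat{n}^g$ shows that $A \mapsto \det L_A(Y)$ is a $\GL_d \times \GL_d$-semi-invariant with character $(P, Q) \mapsto (\det P)^n (\det Q)^{-n}$, hence an $\SL_d \times \SL_d$-invariant. By Lemma \ref{l:closed} both $\SL_d \times \SL_d$-orbits are Zariski closed. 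Since these pencil-determinant functions on generic matrices generate the ring of $\SL_d \times \SL_d$-invariants on $\mat{d}^{g+1}$ (by the Derksen-Weyman / Schofield theory of semi-invariants for the $(g+1)$-Kronecker quiver), closed-orbit separation for reductive group actions places $A$ and $\lambda B$ in the same $\SL_d \times \SL_d$-orbit. Composing with the $\lambda$-scaling furnishes $P, Q \in \GL_d(\C)$ with $M = PLQ^{-1}$; renaming $Q^{-1}$ as $Q$ completes the proof.

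The main obstacle is the final invariant-theoretic step: bridging from coincidence of pencil determinants on generic matrices (up to scalars) to an honest $\GL_d \times \GL_d$-equivalence of coefficient tuples. The Kronecker-quiver semi-invariant generation theorem suffices for this; an alternative route would go through Cohn's stable-associativity theory (\cite[\S 5.8]{Coh}), in which two stably associated epic pencils of the same size are shown to be scalar $\GL_d \times \GL_d$-equivalent.
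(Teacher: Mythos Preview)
Your argument follows the same overall strategy as the paper: irreducibility of $\det L(\gX^n)$ via Theorem \ref{t:irr}, extraction of a single $\gamma$ with $c_n=\gamma^n$, normalization, and then the invariant theory of the $(g{+}1)$-Kronecker quiver (the paper cites \cite{SvdB} or \cite{DM}) together with the closed-orbit statement of Lemma \ref{l:closed}. The one substantive difference is the $d=e$ step. The paper homogenizes, comparing the irreducible homogeneous polynomials $\det\big(\sum_{j\ge 0}A_j\otimes X_j\big)$ and $\det\big(\sum_{j\ge 0}B_j\otimes X_j\big)$ of degrees $dn$ and $en$; these agree up to a power of $\det X_0$, and irreducibility forces $d=e$. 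Your degree-counting route instead needs $\deg\det L(\gX^n)=dn$ with $d$ the \emph{pencil size}, and this does not follow from Lemma \ref{l:deg} or \cite[Lemma 3.1]{HKV} as cited: those only give linearity in $n$ with some slope, not identified with the size. The missing observation is that for an epic pencil the homogeneous part $\sum_{j\ge1}A_jx_j$ is full (the epic condition is exactly the absence of a common left/right kernel for $A_1,\dots,A_g$), so by Lemma \ref{l:full} the top-degree form $\det\big(\sum_{j\ge1}A_j\otimes\gX^n_j\big)$ is nonzero for large $n$, giving the slope $d$. The paper's homogenization also directly produces the $(g{+}1)$-variable identity $\det\big(\sum_{j\ge0}A_j\otimes X_j\big)=\det\big(\sum_{j\ge0}B_j\otimes X_j\big)$ needed for \cite{SvdB}/\cite{DM}; your dehomogenized equality (with $X_0=I_n$) recovers this after the one-line identity $\det\big(\sum_{j\ge0}A_j\otimes X_j\big)=(\det X_0)^d\,\det L_A(X_0^{-1}X_1,\dots,X_0^{-1}X_g)$ for invertible $X_0$ and Zariski density, so that step should be made explicit.
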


\begin{proof}
By Theorem \ref{t:irr} there exists $n_0\in\N$ such that $\det L(\gX^n)$ and $\det M(\gX^n)$ are irreducible for all $n\ge n_0$. By $\fl(L)=\fl(M)$ and irreducibility we see that $\det L(\gX^n)$ and $\det M(\gX^n)$ are equal up to a multiplicative constant for every $n\ge n_0$. Thus there exists $\alpha\in\C\setminus\{0\}$ such that $\det L(\gX^n)=\alpha^n\det M(\gX^n)$ for all $n\in \N$. After multiplying $L$ by $\alpha^{-1}$ we can therefore assume that
\begin{equation}\label{e:331}
\det L(\gX^n)=\det M(\gX^n) \qquad \text{for all }n\in\N.
\end{equation}

Let $L=A_0+\sum_{j>0}A_jx_j$ and $M=B_0+\sum_{j>0}B_jx_j$. Then
\begin{equation}\label{e:34}
\det\left(A_0\otimes Y+\sum_{j>0}A_j\otimes X_j\right)=
(\det Y)^d\det L(Y^{-1}X)
\end{equation}
for every $X\in\mat{n}^g$ and $Y\in\GL_n(\C)$, and similarly for $M$. By \eqref{e:331}, the polynomials
\begin{equation}\label{e:35}
(\det Y)^d\det L(Y^{-1}X),\qquad (\det Y)^e\det M(Y^{-1}X)
\end{equation}
agree up to a factor of $\det Y$. However, since $L$ is indecomposable, the left-hand side of \eqref{e:34} is an irreducible polynomial in $X$ for large enough $n$. Analogous conclusion holds for $M$, so the polynomials in \eqref{e:35} are equal, and thus $d=e$. Consequently, \eqref{e:331} and \eqref{e:34} imply
\begin{equation}\label{e:36}
\forall n\in\N,\ \forall X\in\mat{n}^{g+1}\colon\qquad
\det\left(\sum_{j=0}^gA_j\otimes X_j\right)=
\det\left(\sum_{j=0}^gB_j\otimes X_j\right).
\end{equation}

Let us view $A=(A_0,\dots,A_g)$ and $B=(B_0,\dots,B_g)$ as elements of $\mat{d}^{g+1}$ with the action of $\SL_d(\C)\times \SL_d(\C)$. Then $p(A)=p(B)$ for every $\SL_d(\C)\times \SL_d(\C)$-invariant polynomial function $p:\mat{d}^{g+1}\to \C$ by \eqref{e:36} and Theorem \cite[Theorem 2.3]{SvdB} or \cite[Theorem 1.4]{DM}. Therefore the Zariski closures of $\SL_d(\C)\times \SL_d(\C)$-orbits of $A$ and $B$ coincide. But the orbits of $A$ and $B$ are closed by Lemma \ref{l:closed}, so $A$ and $B$ lie in the same orbit.
\end{proof}

\begin{thm}[Singul{\"a}rstellensatz]\label{t:new}
Let $f$ and $h$ be full matrix polynomials. Then $\fl(f)\subseteq \fl(h)$ if and only if every atomic factor of $f$ is stably associated to a factor of $h$.\looseness=-1
\end{thm}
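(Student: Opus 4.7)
For the easy direction ($\Leftarrow$), write $f = f_1 \cdots f_r$ as a product of atoms. Any pair $g_1, g_2$ of stably associated matrix polynomials satisfies $\det g_1(\gX^n) \propto \det g_2(\gX^n)$ for every $n$: the defining identity $g_2\oplus I = P(g_1\oplus I)Q$ with $P, Q$ invertible over $\ax$ forces $\det P(\gX^n), \det Q(\gX^n)$ to be nonzero scalars, so $\fl(g_1) = \fl(g_2)$. Hence if each $f_i$ is stably associated to a factor $h'_i$ of $h$, then $\fl(f_i) = \fl(h'_i) \subseteq \fl(h)$, and taking the union over $i$ yields $\fl(f) \subseteq \fl(h)$.

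For the converse, factor $f = f_1 \cdots f_r$ and $h = h_1 \cdots h_s$ into atoms via \cite[Proposition 3.2.9]{Coh}. Since $\fl(f_i) \subseteq \fl(f) \subseteq \fl(h) = \bigcup_j \fl(h_j)$, it suffices to treat the case where $f$ is itself an atom and exhibit some $h_j$ to which it is stably associated. Replace $f$ and each $h_j$ by epic indecomposable linearizations $L$ and $M_j$. By Theorem \ref{t:irr} there is $N$ such that $\det L(\gX^n)$ and every $\det M_j(\gX^n)$ are irreducible for $n \geq N$. For such $n$, irreducibility of $\fl_n(L)$ together with $\fl_n(L) \subseteq \bigcup_j \fl_n(M_j)$ forces $\fl_n(L) \subseteq \fl_n(M_{j(n)})$ for some index $j(n) \in \{1,\dots,s\}$; equality of irreducible hypersurfaces then gives $\det L(\gX^n) = c_n \det M_{j(n)}(\gX^n)$ with $c_n \in \C\setminus\{0\}$.

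The crux is showing that the index $j(n)$ can be chosen independent of $n$. Evaluating the level-$(n_1 + n_2)$ identity on block-diagonal tuples $X \oplus Y$ yields the polynomial identity
\begin{equation*}
\det L(\gX^{n_1}) \, \det L(\gX^{n_2}) \;=\; c_{n_1+n_2} \, \det M_{j(n_1+n_2)}(\gX^{n_1}) \, \det M_{j(n_1+n_2)}(\gX^{n_2})
\end{equation*}
in the joint polynomial ring. Each of the four factors is irreducible in its own disjoint set of commuting variables, hence remains irreducible in the joint ring, so unique factorization forces $\det L(\gX^{n_1})$ to be a nonzero constant multiple of $\det M_{j(n_1+n_2)}(\gX^{n_1})$. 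By the pigeonhole principle some $j^*$ satisfies $j(n) = j^*$ for infinitely many $n$; for any $n_1 \geq N$ one picks $n_2 \geq N$ with $j(n_1 + n_2) = j^*$ to conclude $\fl_{n_1}(L) = \fl_{n_1}(M_{j^*})$. The same block-diagonal trick, combined with fullness of $L$ and $M_{j^*}$, extends the equality of free loci to all levels $n < N$ as well. Theorem \ref{t:ind} then yields $L = P M_{j^*} Q$ for some $P, Q \in \GL_d(\C)$, so $L$ is associated (hence stably associated) to $M_{j^*}$. Transitivity of stable associativity delivers $f$ stably associated to $h_{j^*}$.

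The main obstacle is precisely the potential $n$-dependence of the irreducible component of $\fl_n(h)$ that contains $\fl_n(L)$: without addressing this, different atomic factors of $h$ could compete across levels and no single $h_j$ would be singled out. The unique-factorization argument on block-diagonal evaluations, together with the finiteness of $\{h_1,\dots,h_s\}$, is what pins down a uniform $j^*$ and makes Theorem \ref{t:ind} applicable.
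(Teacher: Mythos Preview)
Your proof is correct and follows essentially the same route as the paper: reduce to $f$ an atom, invoke Theorem~\ref{t:irr} and pigeonhole to locate a single atomic factor $h_{j^*}$ with $\fl_n(f)=\fl_n(h_{j^*})$ for infinitely many $n$, extend this to all $n$ via block-diagonal evaluations, and finish with Theorem~\ref{t:ind}. The only cosmetic difference is that you argue the extension step through unique factorization of $\det L(\gX^{n_1})\det L(\gX^{n_2})$ in disjoint variables, whereas the paper phrases it via the direct-sum embedding $\fl_k(p)\hookrightarrow\fl_{km}(p)$, $X\mapsto X^{\oplus m}$.
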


\begin{proof}
It suffices to assume that $f$ is an atom. Let $h=h_1\dots h_\ell$ be a factorization of $h$ into atoms.

$(\Leftarrow)$ If $f$ is stably associated to $h_i$, then there is $\alpha\neq0$ such that $\det f(\gX^n)=\alpha^n \det h_i(\gX^n)$ for all $n$, so $\fl(f)=\fl(h_i)\subseteq \fl(h)$.

$(\Rightarrow)$ Since $h_1,\dots,h_\ell$ are atoms, there exists $n_0\in \N$ such that $\fl_n(f)$ and $\fl_n(h_i)$ for $1\le i\le\ell$ are irreducible algebraic sets for every $n\ge n_0$. Since
$$\fl(f)\subseteq\fl(h)=\fl(h_1)\cup\cdots\cup\fl(h_\ell),$$
we conclude that for every $n\ge n_0$, $\fl_n(f)=\fl_n(h_i)$ for some $i$. Hence there exists $i$ such that $\fl_n(f)=\fl_n(h_i)$ for infinitely many $n$. Since $\fl_k(p)$ embeds into $\fl_{km}(p)$ via $X\mapsto X^{\oplus m}$ for every $k,m\in\N$ and any matrix polynomial $p$, we have $\fl_n(f)=\fl_n(h_i)$ for all $n$. Let $L$ and $M$ be the \epic pencils that are stably associated to $f$ and $h_i$. Then $L,M$ are atoms and thus indecomposable by \cite[Theorem 5.8.8]{Coh}. Since $\fl(L)=\fl(M)$, $L$ and $M$ are stably associated by Theorem \ref{t:ind}. Therefore $f$ and $h_i$ are stably associated.
\end{proof}

\begin{cor}\label{c:obvious}
Let $f$ be an atom and $h$ a full matrix polynomial. Then $\fl(f)= \fl(h)$ if and only if $h$ is a product of matrix polynomials each of which
is stably associated to  $f$.\looseness=-1
\end{cor}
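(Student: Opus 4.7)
My plan is to deduce the corollary essentially as a direct consequence of the Singul\"arstellensatz (Theorem \ref{t:new}) and the basic fact, extracted from its proof, that stably associated matrix polynomials have the same free locus. The two implications require very different amounts of work: one is essentially automatic, while the other needs a small extra argument about factors of an atom.

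For the $(\Leftarrow)$ direction, I would start by writing $h=h_1\cdots h_\ell$ with each $h_i$ stably associated to $f$. Stable associativity implies (as recorded at the start of the proof of Theorem \ref{t:new}) that there are nonzero constants $\alpha_i$ with $\det h_i(\gX^n)=\alpha_i^n\det f(\gX^n)$ for all $n$, so $\fl(h_i)=\fl(f)$ for each $i$. Since $\det h(\gX^n)=\prod_i\det h_i(\gX^n)$, taking the union over $i$ gives $\fl(h)=\bigcup_i\fl(h_i)=\fl(f)$.

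For the $(\Rightarrow)$ direction, I would use that $h$ is full to invoke Cohn's \cite[Proposition 3.2.9]{Coh} and factor $h=h_1\cdots h_\ell$ into atoms. Since $\fl(h_i)\subseteq\fl(h)=\fl(f)$, Theorem \ref{t:new} applied to the atom $h_i$ on the left and $f$ on the right gives that $h_i$ is stably associated to some factor of $f$. The point I would highlight here is that, because $f$ is an atom, any factor of $f$ is either a unit in $\ax^{\de\times\de}$ or associated (hence stably associated) to $f$; stable associativity preserves invertibility and atomicity, so since $h_i$ itself is a (non-invertible) atom, the only possibility is that $h_i$ is stably associated to $f$.

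The main obstacle, such as it is, is the bookkeeping in the last step: making sure ``factor of $f$'' in the statement of Theorem \ref{t:new} is understood broadly enough to include non-atomic factors, yet also that for an atom $f$ this still forces the conclusion. I expect this to be handled in one sentence using that atoms are preserved under stable associativity together with the observation that in a factorization $f=ab$ of an atom, one of $a,b$ is a unit, so the other is associated to $f$. No new machinery is needed beyond what Theorem \ref{t:new} already supplies.
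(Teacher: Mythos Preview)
Your proposal is correct and is precisely the kind of argument the paper has in mind: the corollary is stated without proof, as an immediate consequence of Theorem \ref{t:new}, and your two-direction derivation from that theorem (plus the observation that an atom's only non-unit factors are associated to itself) is exactly how one fills in the details. Your concern about ``factor of $f$'' is harmless; in the proof of Theorem \ref{t:new} the factors are atomic, so when $f$ is an atom the conclusion is simply that $h_i$ is stably associated to $f$.
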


Finally, let us record the following observation about free loci, which implies the introductory version of Theorem \ref{t:new} above, and will be used several times later in the text. While we usually think of free loci as analogs of hypersurfaces, their intersections do not behave as lower-dimensional varieties.

\begin{prop}\label{p:inter}
Let $f_1,\dots,f_s,h$ be matrix polynomials. If $\bigcap_j\fl(f_j)\subseteq \fl(h)$, then there exists $j\ge1$ such that $\fl(f_j)\subseteq \fl(h)$.
\end{prop}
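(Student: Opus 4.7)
The plan is to prove the contrapositive via a direct product construction. Suppose $\fl(f_j)\not\subseteq\fl(h)$ for every $j=1,\dots,s$; the goal is to produce a single tuple lying in $\bigcap_j\fl(f_j)$ but outside $\fl(h)$, contradicting the hypothesis.

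For each $j$, the assumption gives some $n_j\in\N$ and a witness $X^{(j)}\in\fl_{n_j}(f_j)\setminus\fl_{n_j}(h)$. Setting $N=n_1+\cdots+n_s$, I form the block-diagonal tuple
$$X=X^{(1)}\oplus\cdots\oplus X^{(s)}\in\mat{N}^g.$$
Since $w(X)=w(X^{(1)})\oplus\cdots\oplus w(X^{(s)})$ for every word $w$, a canonical shuffle of tensor factors shows that for any matrix polynomial $p\in\ax^{\de\times\de}$ the value $p(X)$ is similar to the block direct sum $p(X^{(1)})\oplus\cdots\oplus p(X^{(s)})$, yielding the multiplicative identity
$$\det p(X)=\prod_{j=1}^s\det p(X^{(j)}).$$

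Applying this identity with $p=f_j$, the $j$-th factor $\det f_j(X^{(j)})$ vanishes, so $\det f_j(X)=0$ and thus $X\in\fl_N(f_j)\subseteq\fl(f_j)$ for every $j$; in particular $X\in\bigcap_j\fl(f_j)$. Applying the identity with $p=h$, every factor $\det h(X^{(j)})$ is nonzero by the choice of $X^{(j)}$, so $\det h(X)\neq0$ and $X\notin\fl(h)$. This contradicts $\bigcap_j\fl(f_j)\subseteq\fl(h)$ and completes the argument. There is no real obstacle here: the proof is a short direct product construction that does not invoke the irreducibility or factorization machinery developed earlier in the section.
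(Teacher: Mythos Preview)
Your proof is correct and takes essentially the same approach as the paper: both use the block-diagonal (direct sum) construction and the multiplicativity $\det p(X^{(1)}\oplus\cdots\oplus X^{(s)})=\prod_j\det p(X^{(j)})$. The only cosmetic difference is that you argue the full contrapositive, while the paper assumes $\fl(f_j)\not\subseteq\fl(h)$ for $j\ge2$ and directly deduces $\fl(f_1)\subseteq\fl(h)$.
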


\begin{proof}
Suppose $\fl(f_j)\nsubseteq \fl(h)$ for $j\ge2$. Hence there exist matrix tuples $X^2,\dots,X^s$ such that for $j\ge2$,
$$\det f_j(X^j)=0 \qquad \text{and}\qquad \det h(X^j)\neq0.$$
If $\det f_1(X)=0$ for some $X$, then
$$X\oplus \bigoplus_{j\ge 2}X^j \in \bigcap_{j=1}^s\fl(f_j)$$
and so $\det h(X)=0$. Therefore $\fl(f_1)\subseteq \fl(h)$.
\end{proof}


\section{Real Nullstellens\"atze}\label{sec3}

In this section we prove two new real Nullstellens\"atze for the free algebra. In Theorem \ref{t:an} we give a
geometric condition for an analytic (no $x^*$ variables) matrix polynomial $f$ to be a factor of an arbitrary matrix polynomial $h$. This result is under a natural assumption
extended to arbitrary $f$ in Theorem \ref{t:herm}. The proofs rely on preceding results in this paper and real algebraic geometry applied to the real structure on matrix tuples.

\subsection{Real structure}\label{ss:real}

For $f\in\px^{\de\times\de}$ and $(X,Y)\in\mat{n}^g\times\mat{n}^g$ let $f(X,Y)$ denote the involution-free evaluation of $f$ at $(X,Y)$ given by $x_j\mapsto X_j$ and $x_j^*\mapsto Y_j$, and let $f(X,X^*)$ denote the $*$-evaluation at $X$, where $X^*=(X_1^*,\dots,X_g^*)$.

Fix $n\in\N$. The map
\begin{equation}
\JJ:\mat{n}^g\times\mat{n}^g\to \mat{n}^g\times\mat{n}^g, \qquad (X,Y)\mapsto (Y^*,X^*)
\end{equation}
is conjugate-linear and involutive. Thus $\JJ$ is a {\bf real structure} on the complex space $\mat{n}^g\times\mat{n}^g$. Let $\cW$ be a complex algebraic set in $\mat{n}^g\times\mat{n}^g$. If $\JJ$ preserves $\cW$, then let $\cW^{\re}$ denote the set of points in $\cW$ fixed by $\JJ$,
$$\cW^{\re}=\cW\cap \bigcup_{n\in\N}\{(X,X^*)\colon X\in\mat{n}^g \}.$$
Then $\cW^{\re}$ is a real algebraic set, also called the set of {\bf real points} on $\cW$.

\begin{prop}\label{p:clas}
	Let $\cW$ be a complex algebraic set in $\mat{n}^g\times\mat{n}^g$.
	If $\cW$ is irreducible, $\cJ$ preserves $\cW$ and $\cW^{\re}$ contains a smooth point of $\cW$, then $\cW^{\re}$ is Zariski dense in $\cW$.
\end{prop}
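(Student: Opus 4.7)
The plan is to exhibit $\cW^{\re}$ as locally containing a totally real form of $\cW$ near a smooth point $p$, and then promote local density to Zariski density using the identity principle for holomorphic functions of several variables.

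First, I would localize at a smooth point $p\in\cW^{\re}$. On a Euclidean neighborhood $U$ of $p$, the variety $\cW\cap U$ is a complex submanifold of complex dimension $d:=\dim_\C\cW$. Since $\cJ$ is conjugate-linear on the ambient affine space, preserves $\cW$, and fixes $p$, after shrinking $U$ it restricts to an antiholomorphic involution of $\cW\cap U$. Its differential $d\cJ_p$ is then a conjugate-linear involution of $T_p\cW\cong\C^d$, and its real $+1$-eigenspace $V$ satisfies $V\oplus iV=T_p\cW$.

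Second, I would linearize $\cJ$ by an averaging trick. Pick a holomorphic chart $z:\cW\cap U\to\C^d$ with $z(p)=0$ and $dz_p(V)=\R^d$. The map $z\mapsto\overline{\cJ(z)}$ is then holomorphic in $z$ with differential the identity at $0$, so
\[
w:=\tfrac{1}{2}\bigl(z+\overline{\cJ(z)}\bigr)
\]
is a local biholomorphism by the inverse function theorem. A short computation using $\cJ^2=\id$ gives $w(\cJ(z))=\overline{w(z)}$, so in the coordinate $w$ the involution $\cJ$ acts exactly as $w\mapsto\overline w$, and $\cW^{\re}\cap U$ is identified with $\R^d\subseteq\C^d$.

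Third, I would conclude by the identity principle. Any polynomial $P$ on $\mat{n}^g\times\mat{n}^g$ that vanishes on $\cW^{\re}$ pulls back to a holomorphic function $\tilde P(w)$ on a neighborhood of $0\in\C^d$ which vanishes on $\R^d$; its Taylor coefficients at $0$ are determined by the restriction to $\R^d$ and are therefore zero, so $\tilde P\equiv 0$. Hence $P$ vanishes on a Euclidean-open subset of $\cW$. Since any proper Zariski-closed subset of the irreducible variety $\cW$ is nowhere dense in the Euclidean topology, $P$ must vanish on all of $\cW$, proving that $\cW^{\re}$ is Zariski dense in $\cW$.

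The main obstacle is the local linearization in step two: one must verify carefully that the averaged coordinate $w$ conjugates $\cJ$ to complex conjugation \emph{exactly} rather than merely up to higher-order error, so that the fixed locus becomes the honest real subspace $\R^d$. Once this local model is in place, the remaining argument is a routine application of the several-variable identity principle combined with the irreducibility of $\cW$.
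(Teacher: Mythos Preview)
Your argument is correct. The linearization in step two does work exactly as you describe: writing $\tau$ for $\cJ$ in the $z$-chart and $\phi(z)=\overline{\tau(z)}$, one has $w(\tau(z))=\tfrac12\bigl(\tau(z)+\overline{\tau(\tau(z))}\bigr)=\tfrac12\bigl(\tau(z)+\overline{z}\bigr)=\overline{w(z)}$, so the fixed locus of $\cJ$ becomes precisely the real slice in the $w$-coordinate, not merely approximately. The identity-principle step and the final passage from a Euclidean-open vanishing set to all of $\cW$ via irreducibility are both sound.

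By way of comparison, the paper does not prove this proposition at all: it simply invokes it as a classical fact about real points on a complex variety equipped with a real structure, citing \cite[Lemma 1.5]{Be} and \cite[Theorem 4.9]{DE}. Your approach supplies a self-contained complex-analytic proof via local linearization of the antiholomorphic involution followed by the several-variable identity principle; this is essentially the standard argument behind those cited results. The trade-off is that the paper keeps the exposition short by deferring to the literature, whereas your version makes the paper independent of those references at the cost of a page of local differential geometry.
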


\begin{proof}
	This is a special case of a more general statement about real points on a complex variety with a real structure, see e.g.~\cite[Lemma 1.5]{Be} or \cite[Theorem 4.9]{DE}.
\end{proof}

Recall the definition of the real free locus of $f\in\px^{\de\times\de}$ from the introduction. To derive results about $\fl^{\re}(f)$, we consider the (non-real) free locus of $f$ throughout this section in an involution-free way; that is, we ``forget'' the involutive relation between the variables $x$ and $x^*$, and thus $$\fl_n(f)=\{(X,Y)\in\mat{n}^g\times\mat{n}^g:\det f(X,Y)=0\}$$
as a complex algebraic set in $\mat{n}^g\times\mat{n}^g$. If $f=f^*$ is hermitian, then $\fl_n(f)$ is preserved by the real structure $\cJ$, and the real points of the free locus of $f$ are related to the real free locus of $f$ as follows: 
$$\fl_n(f)^{\re}=\left\{(X,X^*)\in\mat{n}^g\times \mat{n}^g\colon X\in \fl_n^{\re}(f)\right\}.$$

\subsection{Analytic Nullstellensatz}

Let $f\in\ax^{\de\times\de}\subset\px^{\de\times\de}$; such polynomials are called {\bf analytic}. Although $f$ contains no $x^*$, it is convenient to view it as a matrix over $\px$, and thus $\fl(f)=\{(X,Y):\det f(X)=0\}$. Observe that the real structure $\JJ$ on $\mat{n}^g\times\mat{n}^g$ preserves
$$\fl_n(f)\cap\fl_n(f^*)
=\left\{(X,Y)\in\mat{n}^g\times\mat{n}^g\colon \det f(X)=0=\det f^*(Y)\right\}$$
since $f(X^*)=f^*(X)^*$. The set of real points of this algebraic set is then
$$(\fl_n(f)\cap\fl_n(f^*))^{\re}
=\left\{(X,X^*)\in\mat{n}^g\times \mat{n}^g\colon X\in \fl_n^{\re}(f)\right\}.$$

\begin{prop}\label{p:an}
Let $f\in\ax^{\de\times\de}$ be an atom. There exists $n_0$ such that for every $n\ge n_0$, 
we have that
$(\fl_n(f)\cap\fl_n(f^*))^{\re}$ is Zariski dense in $\fl_n(f)\cap\fl_n(f^*)$.
\end{prop}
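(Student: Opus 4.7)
The plan is to combine Theorem \ref{t:irr} with Proposition \ref{p:clas}. Because $f$ is analytic, $\fl_n(f)$ is a cylinder over its projection to the first factor $\mat{n}^g$ and $\fl_n(f^*)$ is a cylinder over its projection to the second factor, so the intersection
\[
\cW := \fl_n(f)\cap \fl_n(f^*) \;=\; V_1\times V_2,
\]
where $V_1=\{X\colon \det f(X)=0\}$ and $V_2=\{Y\colon \det f^*(Y)=0\}$, is the product of two hypersurfaces in $\mat{n}^g\times \mat{n}^g$. It has already been observed that $\cJ$ preserves $\cW$, so by Proposition \ref{p:clas} it suffices, for $n$ sufficiently large, to check that $\cW$ is irreducible and that $\cW^{\re}$ contains a smooth point of $\cW$.

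For irreducibility, Theorem \ref{t:irr} supplies $n_0$ such that $\det f(\gX^n)$ is an irreducible polynomial for all $n\ge n_0$. From $f(X)^*=f^*(X^*)$ one obtains $\det f^*(Y)=\overline{\det f(Y^*)}$, so any factorization of $\det f^*(Y)$ as a polynomial in $Y$ yields, after conjugation of coefficients and substituting $Y=X^*$, a factorization of $\det f(X)$; hence $\det f^*(\gX^n)$ is likewise irreducible for $n\ge n_0$. Consequently both $V_1$ and $V_2$ are irreducible reduced hypersurfaces and the product $\cW$ is irreducible.

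For the smooth real point, a point $(X_0,Y_0)\in V_1\times V_2$ is smooth in the product iff $X_0$ is smooth in $V_1$ and $Y_0$ is smooth in $V_2$, and $(X_0,Y_0)$ lies in $\cW^{\re}$ iff $Y_0=X_0^*$. The crux is therefore the symmetry ``$X_0$ is smooth in $V_1$ iff $X_0^*$ is smooth in $V_2$'', and this is the main technical point, since a priori $\det f^*$ is just some polynomial in $Y$ with no evident relationship to the singular locus of $\det f$. Writing coordinates $Y=(Y_{jkl})$, one has $(Y^*)_{jkl}=\overline{Y_{jlk}}$, so $Y\mapsto \det f(Y^*)$ is antiholomorphic in $Y$ and $\det f^*(Y)=\overline{\det f(Y^*)}$ is holomorphic; a Wirtinger computation yields
\[
\frac{\partial \det f^*}{\partial Y_{jkl}}(X_0^*) \;=\; \overline{\frac{\partial \det f}{\partial X_{jlk}}(X_0)},
\]
forcing the two gradients to vanish simultaneously. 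Since $\det f(\gX^n)$ is a nonconstant irreducible polynomial for $n\ge n_0$, $V_1$ has a nonempty smooth locus; picking any smooth $X_0$ there gives a smooth real point $(X_0,X_0^*)\in\cW$, and Proposition \ref{p:clas} completes the proof.
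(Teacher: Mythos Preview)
Your proof is correct and follows essentially the same route as the paper: both invoke Theorem~\ref{t:irr} for irreducibility of $\det f(\gX^n)$, deduce irreducibility of the product $\fl_n(f)\cap\fl_n(f^*)=V_1\times V_2$, use the conjugate-gradient identity (the paper's Lemma~\ref{l:der}, your Wirtinger computation) to show that a smooth $X_0\in V_1$ yields a smooth real point $(X_0,X_0^*)$, and then apply Proposition~\ref{p:clas}. Your exposition is in fact slightly more explicit than the paper's in requiring $X_0$ to lie on $V_1$ before invoking smoothness.
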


The proof uses smooth points, hence involves derivatives and their properties. 
Let $\gY^n=(\gY_1^n,\dots,\gY_g^n)$ be another tuple of $n\times n$ generic matrices. We view the entries of $(\gX^n,\gY^n)$ as the coordinates of the affine space $\mat{n}^g\times\mat{n}^g$.

\begin{lem}\label{l:der}
For all $X\in\mat{n}^g$,
$$\left.\left(\frac{\partial}{\partial \omega_{j\imath\jmath}} \det f(\gX^n)\right)\right|_{\gX^n= X} =
\overline{
	\left.\left(\frac{\partial}{\partial \upsilon_{j\jmath\imath}} \det f^*(\gY^n)\right)\right|_{\gY^n= X^*}
}.$$
\end{lem}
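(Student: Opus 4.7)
The plan is to reduce the identity to a matrix equation via Jacobi's formula for the derivative of a determinant, after which everything follows by a word-by-word inspection. First I would apply $\partial_t \det A = \tr(\operatorname{adj}(A)\,\partial_t A)$ together with the chain rule to rewrite both sides as traces against the adjugate:
\[
\left.\frac{\partial}{\partial \omega_{j\imath\jmath}} \det f(\gX^n)\right|_{\gX^n=X} = \tr\bigl(\operatorname{adj}(f(X))\,A\bigr),
\]
\[
\left.\frac{\partial}{\partial \upsilon_{j\jmath\imath}} \det f^*(\gY^n)\right|_{\gY^n=X^*} = \tr\bigl(\operatorname{adj}(f^*(X^*))\,B\bigr),
\]
where I set $A := \partial_{\omega_{j\imath\jmath}} f(\gX^n)|_{\gX^n=X}$ and $B := \partial_{\upsilon_{j\jmath\imath}} f^*(\gY^n)|_{\gY^n=X^*}$.

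Next, I would combine three elementary facts: (i) $w^*(X^*) = w(X)^*$ for every word $w$ in $x$, which after extending linearly in the coefficients yields $f^*(X^*) = f(X)^*$; (ii) the adjugate commutes with the involution, so $\operatorname{adj}(f^*(X^*)) = \operatorname{adj}(f(X))^*$; and (iii) $\overline{\tr C} = \tr(C^*)$ for any square complex matrix $C$. Feeding these into the $\gY$-side and taking the outer complex conjugate gives
\[
\overline{\left.\frac{\partial}{\partial \upsilon_{j\jmath\imath}} \det f^*(\gY^n)\right|_{\gY^n=X^*}} = \tr\bigl(B^*\,\operatorname{adj}(f(X))\bigr),
\]
and since $\tr$ is cyclic, the lemma is reduced to the single matrix identity $A = B^*$.

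Finally, to verify $A = B^*$, I would expand $f = \sum_w f_w w$, so that $f^* = \sum_w f_w^* w^*$, and apply the Leibniz rule. Since $\partial_{\omega_{j\imath\jmath}} \gX^n_k = \delta_{jk} E_{\imath\jmath}$ and $\partial_{\upsilon_{j\jmath\imath}} \gY^n_k = \delta_{jk} E_{\jmath\imath}$, the contribution of $w = x_{i_1}\cdots x_{i_k}$ to $A$ is
\[
f_w \otimes \sum_{r:\,i_r = j} X_{i_1}\cdots X_{i_{r-1}}\, E_{\imath\jmath}\, X_{i_{r+1}} \cdots X_{i_k},
\]
whereas its contribution to $B$ is the reversed sum
\[
f_w^* \otimes \sum_{r:\,i_r = j} X_{i_k}^*\cdots X_{i_{r+1}}^*\, E_{\jmath\imath}\, X_{i_{r-1}}^*\cdots X_{i_1}^*,
\]
coming from $w^*(\gY^n)|_{\gY^n = X^*} = X_{i_k}^*\cdots X_{i_1}^*$. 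Taking $*$ of the $B$-contribution and using $E_{\jmath\imath}^* = E_{\imath\jmath}$ together with $(f_w^*)^* = f_w$ recovers the $A$-contribution term by term, so $B^* = A$. The only real obstacle is the careful index bookkeeping in this last step (getting the order reversal and the transposition of $E_{\imath\jmath}$ to align with the Kronecker structure of $f_w$); the adjugate-and-trace manipulation is entirely standard.
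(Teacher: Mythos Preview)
Your argument is correct. The paper, however, gives a one-line proof: it simply observes that the scalar identity $\det f(X)=\overline{\det f^*(X^*)}$ holds for all $X$, and the lemma follows by differentiating it (the index swap $(\imath,\jmath)\mapsto(\jmath,\imath)$ on the right arises because the entries of $X^*$ are $\overline{\omega_{j\jmath\imath}}$). In other words, the paper differentiates the determinant identity \emph{as a whole}, whereas you open up both determinants via Jacobi's formula and push the comparison down to the identity $A=B^*$ at the level of $\partial f$ itself, which you then check word by word. Both routes are valid; yours is more explicit about the Kronecker and index bookkeeping, while the paper's is shorter but asks the reader to supply the chain-rule step relating $\partial/\partial\omega_{j\imath\jmath}$ to $\partial/\partial\upsilon_{j\jmath\imath}$ under $X\mapsto X^*$.
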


\begin{proof}
A consequence of the identity $\det f(X)=\overline{\det f^*(X^*)}$.
\end{proof}

\begin{proof}[Proof of Proposition \ref{p:an}]
By Theorem \ref{t:irr} there exists $n_0$ such that $\det f(\gX^n)$ is irreducible for all $n\ge n_0$. Now fix $n\ge n_0$. Since $\det f(\gX^n)$ is irreducible, there exists $X\in\mat{n}^g$ such that
\begin{equation}\label{e:sm1}
\left.\left(\grad_{\gX^n} \det f(\gX^n)\right)\right|_{\gX^n= X}\neq0,
\end{equation}
where $\grad_{\gX^n}$ denotes the gradient with respect to the $gn^2$ variables $\omega_{j\imath\jmath}$. By Lemma \ref{l:der} we also have
\begin{equation}\label{e:sm2}
\left.\left(\grad_{\gY^n} \det f^*(\gY^n)\right)\right|_{\gY^n= X^*}\neq0.
\end{equation}
The algebraic set $\fl_n(f)\cap\fl_n(f^*)$ is defined by $\det f(\gX^n)=0$ and $\det f^*(\gY^n)=0$, and the $2\times (gn^2+gn^2)$ Jacobian matrix of this pair has the form
$$\operatorname{J}(\gX^n,\gY^n)=\begin{pmatrix}
\grad_{\gX^n} \det f(\gX^n) & 0 \\
0 & \grad_{\gY^n} \det f^*(\gY^n)
\end{pmatrix}.$$
Then $\operatorname{J}(X,X^*)$ has rank 2 by \eqref{e:sm1} and \eqref{e:sm2}, so $(X,X^*)$ is a smooth point of $\fl_n(f)\cap\fl_n(f^*)$.
Finally, $\fl_n(f)\cap\fl_n(f^*)$ is irreducible since it is a product of two irreducible hypersurfaces in $\mat{n}^g$,
$$\fl_n(f)\cap\fl_n(f^*)=\{X\colon\det f(X)=0\}\times \{Y\colon\det f^*(Y)=0\}.$$
The statement then follows by Proposition \ref{p:clas}.
\end{proof}

\begin{thm}[Analytic Singul{\"a}rstellensatz]\label{t:an}
Let $f\in \ax^{\de\times\de}$ be an atom and $h\in\px^{\ve\times\ve}$ a full matrix. Then $\fl^{\re}(f)\subseteq\fl^{\re}(h)$ if and only if $f$ or $f^*$ is stably associated to a factor of $h$.
\end{thm}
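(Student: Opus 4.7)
For the ($\Leftarrow$) direction, suppose $f$ is stably associated to a factor $h_0$ of $h$. Stable associativity forces $\det f(X)$ and $\det h_0(X)$ to agree up to a nonzero scalar for every $X\in\mat{n}^g$, so $\fl(f)=\fl(h_0)\subseteq\fl(h)$; restricting to the real diagonal $\{(X,X^*)\colon X\in\mat{n}^g\}$ yields $\fl^{\re}(f)\subseteq\fl^{\re}(h)$. If instead $f^*$ is stably associated to a factor of $h$, the same reasoning produces $\fl^{\re}(f^*)\subseteq\fl^{\re}(h)$, and since $f$ is analytic the identity $\det f^*(X,X^*)=\overline{\det f(X)}$ gives $\fl^{\re}(f)=\fl^{\re}(f^*)$.

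For the nontrivial ($\Rightarrow$) direction, the plan is to upgrade the real containment to a purely complex one and then invoke Theorem \ref{t:new}. Passing to the involution-free viewpoint of Subsection \ref{ss:real}, the hypothesis $\fl_n^{\re}(f)\subseteq\fl_n^{\re}(h)$ translates into $(\fl_n(f)\cap\fl_n(f^*))^{\re}\subseteq\fl_n(h)^{\re}$. For every sufficiently large $n$, Proposition \ref{p:an} guarantees that the left-hand side is Zariski dense in the complex algebraic set $\fl_n(f)\cap\fl_n(f^*)$; since $\fl_n(h)$ is Zariski closed and contains those real points, this forces $\fl_n(f)\cap\fl_n(f^*)\subseteq\fl_n(h)$ for all large $n$. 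The block-diagonal embedding $X\mapsto X^{\oplus m}$ propagates the containment to every $n$, so $\fl(f)\cap\fl(f^*)\subseteq\fl(h)$; by Proposition \ref{p:inter} we obtain either $\fl(f)\subseteq\fl(h)$ or $\fl(f^*)\subseteq\fl(h)$.

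It remains to apply Theorem \ref{t:new}, which requires knowing that the atom $f\in\ax$ is still an atom in the larger algebra $\px$. Theorem \ref{t:irr} does this job: $\det f(\gX^n)$ is irreducible in $\C[\gX^n]$ for all large $n$, and because it does not involve the generic variables $\gY^n$, any factorization in $\C[\gX^n,\gY^n]$ would, by a standard $\gY^n$-degree argument in the domain $\C[\gX^n][\gY^n]$, force both factors to lie in $\C[\gX^n]$; irreducibility therefore persists in the larger polynomial ring. Invoking Theorem \ref{t:irr} inside $\px$ shows $f$ (and likewise $f^*$) is an atom of $\px^{\de\times\de}$, and Theorem \ref{t:new} then delivers the desired stable associativity of $f$ or $f^*$ with a factor of $h$. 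The main obstacle is the Zariski density of $(\fl_n(f)\cap\fl_n(f^*))^{\re}$ inside $\fl_n(f)\cap\fl_n(f^*)$, which is exactly Proposition \ref{p:an}; the remaining atomicity bookkeeping is a clean consequence of Theorem \ref{t:irr}.
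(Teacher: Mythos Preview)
Your proof is correct and follows the same route as the paper's: use Proposition \ref{p:an} to upgrade the real containment $(\fl_n(f)\cap\fl_n(f^*))^{\re}\subseteq\fl_n(h)$ to the complex one, then invoke Proposition \ref{p:inter} and Theorem \ref{t:new}. You also spell out, via Theorem \ref{t:irr}, that an atom of $\ax^{\de\times\de}$ stays an atom in $\px^{\de\times\de}$, a step the paper's proof leaves implicit.
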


\begin{proof}
Only $(\Rightarrow)$ is non-trivial. If $\fl^{\re}(f)\subseteq\fl^{\re}(h)$, then $(\fl_n(f)\cap\fl_n(f^*))^{\re}\subseteq \fl_n(h)$ for all $n$. By Proposition \ref{p:an}, $(\fl_n(f)\cap\fl_n(f^*))^{\re}$ is Zariski dense in $\fl_n(f)\cap\fl_n(f^*)$ for all $n$ large enough. Therefore $\fl_n(f)\cap\fl_n(f^*)\subseteq \fl_n(h)$ for all $n$ large enough, and consequently for all $n$. Therefore $\fl(f)\subseteq \fl(h)$ or $\fl(f^*)\subseteq \fl(h)$ by Proposition \ref{p:inter}, so the conclusion follows by Theorem \ref{t:new}.
\end{proof}

As a corollary we obtain the following somewhat unexpected statement.

\begin{cor}\label{c:an}
Let $h\in\px^{\ve\times\ve}$ and $f_j\in \ax^{\de_j\times\de_j}$ for $j=1,\dots,s$ be full matrices. Then 
\begin{equation}\label{e:re11}
\fl^{\re}(f_1)\cap\cdots\cap\fl^{\re}(f_s) \subseteq\fl^{\re}(h)
\end{equation}
if and only if there exists $j$ such that each atomic factor of $f_j$ is stably associated to a factor of either $h$ or $h^*$.
\end{cor}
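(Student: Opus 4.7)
The plan is to reduce to Theorem \ref{t:an} by combining it with a real analog of Proposition \ref{p:inter}. Concretely, once we prove that the intersection hypothesis \eqref{e:re11} forces $\fl^{\re}(f_j)\subseteq\fl^{\re}(h)$ for some single $j$, we can factor that $f_j$ into analytic atoms $g_1,\dots,g_r$ and apply Theorem \ref{t:an} to each $g_i$; the latter gives that each $g_i$ or $g_i^*$ is stably associated to a factor of $h$. The only translation needed to match the stated conclusion is the elementary observation that $g_i^*$ being stably associated to a factor $p$ of $h$ is equivalent (by applying the involution to the defining equation $g_i^*\oplus I=P(p\oplus I)Q$) to $g_i$ being stably associated to $p^*$, a factor of $h^*$.

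For the easy direction ($\Leftarrow$), I would factor $f_j$ into analytic atoms $g_1,\dots,g_r$ and, for each $g_i$, pick the factor $p$ of $h$ or $h^*$ to which $g_i$ (or equivalently $g_i^*$) is stably associated. Stable associativity yields $\det g_i(\gX^n)=\alpha^n\det p(\gX^n)$ (up to passing to $h^*$), so $\fl(g_i)\subseteq\fl(h)\cup\fl(h^*)$ in the involution-free sense. Restricting to real points and noting that, since $g_i$ is analytic, $\det g_i^*(X,X^*)=\overline{\det g_i(X)}$ vanishes exactly when $\det g_i(X)$ vanishes, I obtain $\fl^{\re}(g_i)\subseteq\fl^{\re}(h)$. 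Unioning over $i$ gives $\fl^{\re}(f_j)\subseteq\fl^{\re}(h)$, which trivially implies \eqref{e:re11}.

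The heart of the argument is the reduction $\bigcap_j\fl^{\re}(f_j)\subseteq\fl^{\re}(h)\Rightarrow \fl^{\re}(f_j)\subseteq\fl^{\re}(h)$ for some $j$; this is the real-Nullstellensatz counterpart of Proposition \ref{p:inter}. I would mimic the proof of that proposition: assume for contradiction that $\fl^{\re}(f_j)\not\subseteq\fl^{\re}(h)$ for all $j\ge 2$ and pick witnesses $X^j$ with $\det f_j(X^j)=0$ but $\det h(X^j,(X^j)^*)\ne0$; then for any $X\in\fl^{\re}(f_1)$ form $\widetilde X:=X\oplus X^2\oplus\cdots\oplus X^s$. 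The key point, and the only ingredient that is new compared to Proposition \ref{p:inter}, is that the direct sum respects the real structure, since $\widetilde X^*=X^*\oplus (X^2)^*\oplus\cdots\oplus (X^s)^*$, so block-diagonal evaluation gives $\det f_j(\widetilde X,\widetilde X^*)=\prod_i\det f_j(X^i,(X^i)^*)=\prod_i\det f_j(X^i)$ (using that $f_j$ is analytic), and analogously for $h$. Then $\widetilde X\in\bigcap_j\fl^{\re}(f_j)\subseteq\fl^{\re}(h)$ forces $\det h(X,X^*)\prod_{j\ge2}\det h(X^j,(X^j)^*)=0$, and since the latter factors are nonzero we conclude $X\in\fl^{\re}(h)$. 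This gives $\fl^{\re}(f_1)\subseteq\fl^{\re}(h)$, and the previous paragraph's analysis applied to $f_1$ completes the proof. The main (mild) obstacle is precisely keeping track of the two determinantal factorizations over a direct sum while verifying that the real structure is preserved; no deeper ingredient beyond Theorem \ref{t:an} is needed.
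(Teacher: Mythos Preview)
Your proposal is correct and follows exactly the approach of the paper: reduce to a single $f_j$ via the direct-sum argument of Proposition~\ref{p:inter} (adapted to real points, which works because $\widetilde X^*$ is again block-diagonal), then apply Theorem~\ref{t:an} to each atomic factor of that $f_j$. The paper states this in two sentences while you spell out the details; the only cosmetic point is that in your easy direction the cleaner way to phrase the second case is simply $\fl^{\re}(h^*)=\fl^{\re}(h)$ (since $h^*(X,X^*)=h(X,X^*)^*$), rather than routing through $g_i^*$.
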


\begin{proof}
If \eqref{e:re11} holds, then as in the proof of Proposition \ref{p:inter} we see that $\fl^{\re}(f_j)\subseteq \fl^{\re}(h)$ for some $j$. The rest is an immediate consequence of Theorem \ref{t:an}.
\end{proof}

Restricting Theorem \ref{t:an} to linear pencils yields the following Gleichstellensatz.

\begin{cor}\label{c:penc0}
Let $L,M$ be indecomposable linear pencils. If $L$ is analytic, then $\fl^{\re}(L)=\fl^{\re}(M)$ if and only if $L,M$ are of the same size $d$ and there exist $P,Q\in\GL_d(\C)$ such that $M=PLQ$ or $M=PL^*Q$.
\end{cor}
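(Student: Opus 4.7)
The plan is to chain Theorem~\ref{t:an} (Analytic Singul\"arstellensatz) with Theorem~\ref{t:ind} (Linear Gleichstellensatz). As preliminaries, an indecomposable pencil is automatically full (a non-full pencil has, in some basis, a zero row or column, which exhibits a decomposition with a zero block of complementary size), hence epic, and hence an atom by \cite[Theorem~5.8.8]{Coh}. Thus $L$ is an atom, and the hypothesis $\fl^{\re}(L)=\fl^{\re}(M)$ together with Lemma~\ref{l:full} applied to the full analytic polynomial $L$ shows the common set is not the entire matrix universe, forcing $M$ to be full and hence also an atom. A direct computation gives $L^*(X,X^*)=L(X)^*$ for analytic $L$, whence $\fl^{\re}(L^*)=\fl^{\re}(L)$; indecomposability is also visibly preserved by the involution $L\mapsto L^*$.

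The $(\Leftarrow)$ direction now follows from the manifest invariance of $\fl^{\re}$ under $L\mapsto L^*$ and under left-right multiplication by invertible scalar matrices. For $(\Rightarrow)$ I would apply Theorem~\ref{t:an} to the inclusion $\fl^{\re}(L)\subseteq\fl^{\re}(M)$ with $f=L$ (analytic atom) and $h=M$ (full), which yields that $L$ or $L^*$ is stably associated to a factor of $M$. Since $M$ is an atom, every non-unit factor of $M$ is associated to $M$, and stable associativity to a unit is impossible for the atom $L$ (resp.~$L^*$); consequently $M$ itself is stably associated to $L$ or to $L^*$. Because stable associativity makes the determinants of the two matrices agree up to a nonzero multiplicative scalar, we obtain $\fl(M)=\fl(L)$ or $\fl(M)=\fl(L^*)$ as subsets of $\mat{n}^g\times\mat{n}^g$.

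Finally, regarding $L$ and $L^*$ as indecomposable pencils in the combined variable tuple $(x,x^*)$ (with vanishing $x^*$- or $x$-coefficients, respectively), Theorem~\ref{t:ind} produces $P,Q\in\GL_d(\C)$ with $M=PLQ$ or $M=PL^*Q$ and in particular matches the sizes of $L$ and $M$. I expect the main delicate step to be the passage from ``stably associated to a \emph{factor} of $M$'' to ``stably associated to $M$ itself'', which is exactly where the atomicity of $M$, secured at the outset by fullness together with indecomposability, is essential. Everything else is a routine assembly of results already available in the paper.
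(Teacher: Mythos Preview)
Your proof is correct and follows exactly the route the paper indicates: the paper's own proof reads simply ``Combine Theorems~\ref{t:ind} and~\ref{t:an}'', and you have carried out that combination in full detail. One small redundancy: since $M$ is assumed indecomposable, your general argument (indecomposable $\Rightarrow$ epic $\Rightarrow$ atom via \cite[Theorem~5.8.8]{Coh}) already makes $M$ an atom, so the separate argument for fullness of $M$ via $\fl^{\re}(M)\neq$ everything is unnecessary.
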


\begin{proof}
Combine Theorems \ref{t:ind} and \ref{t:an}.
\end{proof}

\subsection{Hermitian Nullstellensatz}

In real algebraic geometry, an ideal $J\subset\R[\gx]$ is called real if $J$ consists precisely of polynomials vanishing on the real zero set of $J$. Theorem \ref{t:herm} below is inspired by the characterization of real principal ideals \cite[Theorem 4.5.1]{BCR}. Namely, if $p\in\R[\gx]$ is irreducible, then $(p)$ is a real ideal if and only if $p$ changes sign. Recall that a hermitian matrix polynomial $f=f^*$ is called \nice if there exist $n\in\N$ and $X,Y\in\mat{n}^g$ such that $f(X,X^*),f(Y,Y^*)$ are invertible and have different signatures.

\begin{rem}
If $f(X,X^*)$ is positive (resp. negative) definite for some $X\in\mat{n}^g$, then $f$ is \nice if and only if $f$ (resp. $-f$) is not a sum of hermitian squares. There are also \nice polynomials that are never definite, for instance $f=x_1x_1^*-x_1^*x_1$ (because its trace is constantly 0). Another example of a non-\nice atom is
\begin{equation}\label{e:exa}
f=\begin{pmatrix}1+xx^* & x \\ x^* & -1-x^*x\end{pmatrix}.
\end{equation}
\end{rem}

\begin{prop}\label{p:nice}
Let $f$ be a hermitian polynomial, $n\in\N$ and $X,Y\in\mat{n}^g$ such that $f(X,X^*)$ and $f(Y,Y^*)$ are invertible with different signatures. If $\fl_n(f)$ is irreducible, then $\fl_n(f)^{\re}$ is Zariski dense in $\fl_n(f)$.
\end{prop}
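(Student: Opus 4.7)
The plan is to apply Proposition \ref{p:clas} to $\cW=\fl_n(f)\subset\mat n^g\times\mat n^g$. This complex algebraic set is irreducible by hypothesis, and since $f=f^*$ gives $\det f(Y^*,X^*)=\overline{\det f(X,Y)}$, the real structure $\cJ$ preserves $\cW$. Everything therefore reduces to exhibiting one $\cJ$-fixed smooth point of $\fl_n(f)$.

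The heart of the argument is a dimension count. Set $h(Z):=\det f(Z,Z^*)$, a real-valued polynomial on $\mat n^g\cong\R^{2gn^2}$. If $h$ takes both positive and negative values on $\mat n^g$, then $\fl_n^{\re}(f)=\{h=0\}$ separates nonempty open regions of opposite sign and hence has real dimension exactly $2gn^2-1$ at some point. On the other hand, irreducibility of $\fl_n(f)$ forces $\dim_\C\mathrm{Sing}(\fl_n(f))\le 2gn^2-2$, and the standard bound $\dim_\R W^{\re}\le\dim_\C W$ for real points of a complex variety under a real structure then gives $\dim_\R\mathrm{Sing}(\fl_n(f))^{\re}\le 2gn^2-2$. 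Consequently $\fl_n^{\re}(f)\not\subseteq\mathrm{Sing}(\fl_n(f))^{\re}$, so some $Z\in\fl_n^{\re}(f)$ has $(Z,Z^*)$ smooth on $\fl_n(f)$, and Proposition \ref{p:clas} concludes.

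To complete the proof I would verify that $h$ really does take both positive and negative values on $\mat n^g$. If $n_-(f(X,X^*))$ and $n_-(f(Y,Y^*))$ have different parities this is immediate, since $\mathrm{sign}(h(\cdot))=(-1)^{n_-}$. Otherwise, after replacing $(X,Y)$ by a generic pair in the (nonempty open) set of pairs with the prescribed signatures, standard perturbation theory for the hermitian one-parameter family $t\mapsto f(\gamma(t),\gamma(t)^*)$ along the line $\gamma(t)=(1-t)X+tY$ makes each crossing of $0$ by an eigenvalue simple and transverse; the first crossing then flips the parity of $n_-$ by $\pm 1$, and $h$ changes sign just past it.

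The main obstacle is this same-parity step, where $h(X)$ and $h(Y)$ carry the same sign although the signatures differ: one must justify that the locus in $\mat n^g$ at which $f(\cdot,\cdot^*)$ has a zero eigenvalue of multiplicity $\ge 2$ has real codimension at least $2$, so that a generic straight line avoids it and eigenvalues really do flip $n_-$ one step at a time, guaranteeing the desired sign change of $h$.
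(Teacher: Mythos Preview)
Your overall plan---show that $\fl_n^{\re}(f)$ has real dimension $2gn^2-1$, then compare with the singular locus to extract a smooth real point and invoke Proposition~\ref{p:clas}---is sound. The gap is in the step you yourself flag: the claim that $h(Z)=\det f(Z,Z^*)$ takes both signs is simply false under the stated hypotheses, and the proposed codimension-$2$ fix for the multiplicity-$\ge 2$ locus fails as well.

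Here is a counterexample. Let $q\in\px$ be any nonconstant hermitian scalar polynomial for which there exist $X,Y\in\mat{n}^g$ with $q(X,X^*)\succ0$ and $q(Y,Y^*)\prec0$, and such that $\det q(\gX^n,\gY^n)$ is irreducible (for instance $q=x_1+x_1^*$ with $n=1$). Set $f=qI_2\in\px^{2\times 2}$. Then $f(Z,Z^*)=I_2\otimes q(Z,Z^*)$, so the signatures at $X$ and $Y$ are $(2n,0)$ and $(0,2n)$, which differ; and $\fl_n(f)=\{\det q=0\}$ is irreducible. However
\[
h(Z)=\det f(Z,Z^*)=\bigl(\det q(Z,Z^*)\bigr)^2\ge 0
\]
never changes sign. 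Moreover, whenever $q(Z,Z^*)$ is singular the rank of $f(Z,Z^*)$ drops by at least $2$, so the multiplicity-$\ge 2$ locus is all of $\fl_n^{\re}(f)$: real codimension $1$, not $\ge 2$. Thus no straight line from $X$ to $Y$ can avoid it, the eigenvalue crossings are never simple, and your perturbation argument cannot be salvaged.

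The paper circumvents this entirely by not looking at the sign of $h$. It uses only that a change of \emph{signature} (not of $\mathrm{sign}\det$) along any path forces $\det f=0$ somewhere on that path. Sending the endpoint $Y$ over a small $(2gn^2-1)$-dimensional disk $D$ transverse to the segment $XY$ and taking, for each $Y'\in D$, a singular point on $[X,Y']$, one gets a semialgebraic subset of $\fl_n(f)^{\re}$ that surjects onto $D$; hence $\dim_\R\fl_n(f)^{\re}\ge 2gn^2-1$. The paper then closes up in the complex Zariski topology directly, without Proposition~\ref{p:clas}. If you prefer your route through Proposition~\ref{p:clas}, you can plug in this cone argument to get the needed dimension bound and then run your singular-locus comparison; but the determinant-sign detour should be abandoned.
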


\begin{proof}
As $\mat{n}^g\times\mat{n}^g$ is endowed with the real structure $\JJ$, we can view
$$\bA=\{(Z,Z^*)\colon Z\in \mat{n}^g \}$$
as the corresponding real affine space. There exists $\eta>0$ such that for every $Z\in\mat{n}^g$ with $\|Z-Y\|<\eta$, 
the matrix
$f(Z,Z^*)$ is invertible with the same signature as $f(Y,Y^*)$. Let $D$ be an open ball of radius $\eta$ about $Y$, intersected by the affine subspace through $Y$ that is perpendicular to $X-Y$, i.e.,
$$D=\left\{(Z,Z^*)\in\mat{n}^g\colon \|Z-Y\|<\eta \text{ and } \sum_j\tr((Z_j-Y_j)^*(X_j-Y_j))=0\right\}.$$
Then $D$ is a semialgebraic set in $\bA$ of (real) dimension $gn^2-1$. Let $\cC$ be the convex hull of $\{(X,X^*)\}\cup D$. If $(Z,Z^*)\in D$, then $f(Z,Z^*)$ and $f(X,X^*)$ have different signatures, so $\fl_n(f)^{\re}$ intersects the interior of the line segment between $(X,X^*)$ and $(Z,Z^*)$. Moreover, by the choice of $D$, every line through $(X,X^*)$ intersects $D$ at most once. Therefore we have a surjective map
$$\varphi: \fl_n(f)^{\re}\cap \cC\to D$$
given by projections onto $D$ along the lines through $(X,X^*)$. Then $\varphi$ is clearly semialgebraic, so $\dim(\fl_n(f)^{\re}\cap \cC)=gn^2-1$ by \cite[Theorem 2.8.8]{BCR}. Therefore its Zariski closure in $\mat{n}^g\times\mat{n}^g$ (with real structure $\cJ$) is a hypersurface by \cite[Proposition 2.8.2]{BCR}. Since the latter is contained in the irreducible $\fl_n(f)$, we conclude that $\fl_n(f)^{\re}\cap\cC$ is Zariski dense in $\fl_n(f)$.
\end{proof}

\begin{thm}[Hermitian Singul{\"a}rstellensatz]\label{t:herm}
Let $h\in\px^{\ve\times\ve}$ be a full matrix and let $f\in \px^{\de\times\de}$ be an \nice hermitian atom. Then $\fl^{\re}(f)\subseteq\fl^{\re}(h)$ if and only if $f$ is stably associated to a factor of $h$.
\end{thm}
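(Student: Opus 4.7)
The plan is to combine the Zariski density statement from Proposition \ref{p:nice} with the complex Singul\"arstellensatz (Theorem \ref{t:new}), following the same template that makes Theorem \ref{t:an} work in the analytic case.

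The $(\Leftarrow)$ direction is immediate: if $f$ is stably associated to an atomic factor $h_0$ of $h$, then in the involution-free picture $\fl(f)=\fl(h_0)\subseteq\fl(h)$ in $\mat{n}^g\times\mat{n}^g$, and restricting to the real slice $\{(X,X^*)\}$ yields $\fl^{\re}(f)\subseteq\fl^{\re}(h)$.

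For $(\Rightarrow)$ the strategy is as follows. First, treat $f$ as a matrix polynomial in the $2g$ free variables $x,x^*$. Since $f$ is an atom in $\px^{\de\times\de}$, Theorem \ref{t:irr} gives $n_0$ such that for every $n\ge n_0$ the polynomial $\det f(\gX^n,\gY^n)$ is irreducible; equivalently, $\fl_n(f)\subseteq \mat{n}^g\times \mat{n}^g$ is an irreducible hypersurface. The unsignatured hypothesis on $f$ then lets us apply Proposition \ref{p:nice} to conclude that $\fl_n(f)^{\re}$ is Zariski dense in $\fl_n(f)$ for all sufficiently large $n$. The hypothesis $\fl^{\re}(f)\subseteq\fl^{\re}(h)$ translates, under the identification of real points with the slice $\{(X,X^*)\}$, into $\fl_n(f)^{\re}\subseteq\fl_n(h)^{\re}\subseteq\fl_n(h)$; since $\fl_n(h)$ is Zariski closed in $\mat{n}^g\times\mat{n}^g$, Zariski density upgrades this to $\fl_n(f)\subseteq\fl_n(h)$ for all $n\ge n_0$. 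A standard ampliation $X\mapsto X^{\oplus m}$ (as used in the proof of Theorem \ref{t:new}) then extends the inclusion to all $n$, giving $\fl(f)\subseteq\fl(h)$ as involution-free loci.

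At this point we invoke the complex Singul\"arstellensatz (Theorem \ref{t:new}) for the $2g$-variable algebra $\px^{\de\times\de}$: since $f$ is an atom and $\fl(f)\subseteq\fl(h)$, $f$ must be stably associated to an atomic factor of $h$ in $\px^{\de\times\de}$, which is exactly the desired conclusion. The only real content beyond bookkeeping is the unsignatured-to-density step, and that is precisely what Proposition \ref{p:nice} was designed to supply; this is also the main obstacle, since without unsignaturedness the real locus may be contained in a proper subvariety of $\fl_n(f)$ (as Example \ref{e:sos} and \eqref{e:exa} illustrate) and Zariski density would fail.
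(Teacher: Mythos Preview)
Your proof is correct and matches the paper's argument essentially line for line. One small imprecision: the unsignatured hypothesis supplies witnesses $X,Y\in\mat{n_1}^g$ at a single size $n_1$, so Proposition~\ref{p:nice} combined with Theorem~\ref{t:irr} directly gives Zariski density only for \emph{infinitely many} $n$ (the large multiples of $n_1$), not for all sufficiently large $n$ as you write; the paper phrases it this way, and your ampliation step then completes the passage to $\fl(f)\subseteq\fl(h)$ just as you describe.
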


\begin{proof}
Again only $(\Rightarrow)$ is non-trivial. By $\fl^{\re}(f)\subseteq\fl^{\re}(h)$ we have $\fl_n(f)^{\re}\subseteq \fl_n(h)$ for all $n$. Since $f$ is an \nice atom, $\fl_n(f)$ is irreducible and $\fl_n(f)^{\re}$ is Zariski dense in $\fl_n(f)$ for infinitely many $n$ by Proposition \ref{p:nice} and Theorem \ref{t:irr}. Consequently $\fl_n(f)\subseteq \fl_n(h)$ for infinitely many $n$, so $\fl(f)\subseteq\fl(h)$ and Theorem \ref{t:new} applies.
\end{proof}

Similarly to Corollary \ref{c:an}, we can use a modified proof of Proposition \ref{p:inter} to obtain the following.

\begin{cor}\label{c:herm}
For $j=1,\dots,s$ let $f_j\in \px^{\de_j\times\de_j}$ be \nice hermitian atoms and $h\in\px^{\ve\times\ve}$ a full matrix. Then 
$$\fl^{\re}(f_1)\cap\cdots\cap \fl^{\re}(f_s)\subseteq \fl^{\re}(h)$$
if and only if for some $j$, $f_j$ is stably associated to a factor of $h$.
\end{cor}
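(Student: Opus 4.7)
The plan is to reduce the intersection statement to the single-polynomial case, which is exactly Theorem~\ref{t:herm}, by running the direct-sum argument from Proposition~\ref{p:inter} in the real setting. The $(\Leftarrow)$ direction is immediate: if $f_j$ is stably associated to a factor of $h$, then $\fl(f_j)\subseteq\fl(h)$ and a fortiori $\fl^{\re}(f_j)\subseteq\fl^{\re}(h)$, which dominates the intersection.

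For the $(\Rightarrow)$ direction, suppose $\fl^{\re}(f_j)\not\subseteq \fl^{\re}(h)$ for every $j\ge 2$. Then for each such $j$ choose $n_j\in\N$ and a tuple $X^j\in\mat{n_j}^g$ with
\[
\det f_j(X^j,(X^j)^*)=0,\qquad \det h(X^j,(X^j)^*)\ne 0.
\]
Take an arbitrary $X\in\fl^{\re}(f_1)$ of some size $n$ and form the direct sum $Z=X\oplus X^2\oplus\cdots\oplus X^s$, which has size $N=n+n_2+\cdots+n_s$ and satisfies $Z^*=X^*\oplus (X^2)^*\oplus\cdots\oplus (X^s)^*$. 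Since evaluation is compatible with direct sums and the determinant factors multiplicatively over blocks, one has $f_j(Z,Z^*)=f_j(X,X^*)\oplus\bigoplus_{k\ge 2}f_j(X^k,(X^k)^*)$, so $\det f_j(Z,Z^*)=0$ for every $j$; in particular $Z\in\bigcap_j\fl^{\re}_N(f_j)$. By hypothesis, $Z\in\fl^{\re}_N(h)$, hence
\[
0=\det h(Z,Z^*)=\det h(X,X^*)\prod_{k\ge 2}\det h(X^k,(X^k)^*).
\]
Since the right-hand factors are nonzero, $\det h(X,X^*)=0$. As $X$ was arbitrary, $\fl^{\re}(f_1)\subseteq\fl^{\re}(h)$.

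Now the hypotheses on $f_1$ (it is a \nice hermitian atom) are precisely those of Theorem~\ref{t:herm}, which therefore applies and yields that $f_1$ is stably associated to a factor of $h$. This concludes the proof.

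There is essentially no obstacle here: the only delicate point is to verify that the direct-sum tuple $Z$ still lies in each \emph{real} free locus of $f_j$, and this is immediate because $(X\oplus Y)^*=X^*\oplus Y^*$, so the $*$-evaluation respects block-diagonal tuples in the same way the involution-free evaluation does in Proposition~\ref{p:inter}. Everything else is a direct appeal to Theorem~\ref{t:herm}.
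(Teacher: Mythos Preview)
Your proof is correct and is precisely the argument the paper has in mind: the text only says ``Similarly to Corollary~\ref{c:an}, we can use a modified proof of Proposition~\ref{p:inter},'' and you have carried out exactly that modification---running the direct-sum reduction with $*$-evaluations to obtain $\fl^{\re}(f_j)\subseteq\fl^{\re}(h)$ for some $j$, then invoking Theorem~\ref{t:herm}. The observation that $(X\oplus Y)^*=X^*\oplus Y^*$ is indeed the only extra ingredient needed over Proposition~\ref{p:inter}.
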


\begin{exa}\label{e:sos}
Theorem \ref{t:herm} does not hold for arbitrary hermitian atoms $f$.  For example, if $f=x_1x_1^*+x_2x_2^*$ and $h=x_1$, then $\fl^{\re}(f)\subseteq\fl^{\re}(h)$ but $f$ is not stably associated to $h$. For another example, let $f$ be as in \eqref{e:exa}. Then $f(0)=1\oplus -1$ and $f(X,X^*)$ is invertible for every $X$, so $f$ has a constant signature on $\mat{n}$ for every $n$. Moreover, $f$ is not invertible in $\px^{2\times 2}$, and is an atom. Hence $f$ and $h=1$ satisfy $\fl^{\re}(f)=\emptyset=\fl^{\re}(h)$ but $f$ is not stably associated to $h$. For an algorithm checking whether the free real locus of a polynomial is empty, see \cite{KPV}.
\end{exa}

We conclude this section with a Linear Gleichstellensatz for hermitian indecomposable pencils. Since every hermitian monic pencil is unsignatured, the following corollary generalizes \cite[Corollary 5.5]{KV17} and preceding versions with operator-algebraic proofs \cite{HKM13,Zal17,DDOSS17} to non-monic pencils.

\begin{cor}[Hermitian Linear Gleichstellensatz]\label{c:penc}
Let $L,M$ be hermitian indecomposable linear pencils. If $L$ is unsignatured, then $\fl^{\re}(L)=\fl^{\re}(M)$ if and only if $L,M$ are of the same size $d$ and there exists $P\in\GL_d(\C)$ such that $M=\pm PLP^*$.
\end{cor}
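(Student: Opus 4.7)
My plan is to combine Theorems \ref{t:herm} and \ref{t:ind} with the stabilizer description of Lemma \ref{l:closed} to reduce the statement to a scalar extraction followed by a real square-root rescaling. The $(\Leftarrow)$ direction is immediate: $M=\pm PLP^*$ yields $M(X,X^*)=\pm PL(X,X^*)P^*$ for every $X\in\mat{n}^g$, so the two real loci coincide.

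For $(\Rightarrow)$, I first observe that $L$ and $M$ are atoms, being (epic) indecomposable pencils, by \cite[Theorem 5.8.8]{Coh}. Since $L$ is an \nice hermitian atom and $\fl^{\re}(L)\subseteq\fl^{\re}(M)$, Theorem \ref{t:herm} tells me that $L$ is stably associated to a factor of $M$; atomicity of $M$ then forces $L$ to be stably associated to $M$ itself. In particular $\fl(L)=\fl(M)$, so Theorem \ref{t:ind} provides $d=\dim L=\dim M$ together with $P,Q\in\GL_d(\C)$ such that $M=PLQ$. I now exploit hermiticity: from $PLQ=M=M^*=Q^*LP^*$ I rearrange to $LS=S^*L$ with $S:=Q(P^*)^{-1}$, equivalently $S^*A_jS^{-1}=A_j$ for every coefficient $A_j$ of $L$. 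Thus $(S^*,S^{-1})$ lies in the $\GL_d(\C)\times\GL_d(\C)$-stabilizer of the coefficient tuple of $L$; by indecomposability of $L$, Lemma \ref{l:closed} forces $S=\beta I$ for some $\beta\in\C^\times$, and matching the two components of the stabilizer gives $S^*=S$, whence $\beta\in\R^\times$.

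Substituting $Q=\beta P^*$ yields $M=\beta PLP^*$. A real square-root rescaling $P':=\sqrt{|\beta|}\,P$ then produces $M=P'LP'^*$ when $\beta>0$ and $M=-P'LP'^*$ when $\beta<0$, giving the desired conclusion $M=\pm P'LP'^*$. The principal obstacle I anticipate is the stabilizer step: one must carefully match the action convention of Lemma \ref{l:closed} to the twisted relation $A_jS=S^*A_j$ and then confirm that the scalar produced lands in $\R^\times$ rather than merely in $\C^\times$; once that is settled, the sign bifurcation is routine.
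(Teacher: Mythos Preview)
Your argument is correct and follows the same route as the paper: apply Theorem \ref{t:herm} to obtain stable associativity of $L$ and $M$, then Theorem \ref{t:ind} to get $M=PLQ$, use hermiticity together with the stabilizer description of Lemma \ref{l:closed} to force $Q$ to be a real scalar multiple of $P^*$, and rescale. One small bookkeeping slip worth fixing: under the action $(P,Q)\cdot X=PXQ^{-1}$, the identity $S^*A_jS^{-1}=A_j$ means that $(S^*,S)$, not $(S^*,S^{-1})$, lies in the stabilizer; Lemma \ref{l:closed} then gives $S=\beta I$ with $|\beta|=1$, and reality of $\beta$ (hence $\beta=\pm1$, so no rescaling is needed) follows from hermiticity of $M=\beta PLP^*$, which is how the paper finishes as well.
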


\begin{proof}
The implication $(\Leftarrow)$ is obvious, so we consider $(\Rightarrow)$. Since $\fl^{\re}(L)=\fl^{\re}(M)$ and $L,M$ are atoms, $L$ is stably associated to $M$ by Theorem \ref{t:herm}. Therefore $L,M$ are of the same size $d$ and $M=PLQ$ for $P,Q\in\GL_d(\C)$ by Theorem \ref{t:ind}. Since $L,M$ are hermitian, we also have $M=Q^*LP^*$. Therefore $(P^{-1}Q^*,P^*Q^{-1})$ stabilizes $L$, so $P^{-1}Q^*=\alpha I$ by Lemma \ref{l:closed}. Since $M=\alpha PLP^*$ and $L,M$ are hermitian, we have $\alpha\in\R\setminus\{0\}$, so after rescaling $P$ we can choose $\alpha=\pm1$.
\end{proof}


\section{Null- and Positivstellensatz with hard zeros}\label{sec4}

In this section we present a new real Nullstellensatz for hard zeros $\cV^{\re}$ as opposed to determinantal zeros $\fl^{\re}$ discussed above. While the unsignatured condition above is too weak (at least for our techniques), a stronger unsignatured condition succeeds as is seen in Theorem \ref{t:null}. 
Its proof depends on basic commutative algebra and the technique of rational resolvable ideals developed in \cite{KVV}. 
Then, we use this in Corollary \ref{c:posss} to prove a Positivstellensatz (by using a slack variable) for domains defined by quadratic polynomials.

\subsection{Background on rationally resolvable ideals}\label{ss:prelim}

For an ideal $J\subset\ax$ let
$$\cV(J)=\bigcup_{n\in\N}\cV_n(J),\qquad \cV_n(J) = \left\{X\in \mat{n}^g\colon f(X)=0\ \forall f\in J\right\}$$
be its {\bf free zero set}. If $f\in\ax$, then $\cV(f)$ denotes the free zero set of the ideal generated by $f$. We say that $J$ has the {\bf Nullstellensatz property} if
$$f|_{\cV(J)}=0 \quad \iff\quad f\in\ J$$
for all $f\in\ax$. This is a noncommutative analog of a radical ideal in classical algebraic geometry \cite[Section 1.6]{Eis}.

We recall rationally resolvable ideals from \cite{KVV}. The free algebra $\ax$ admits the universal skew field of fractions $\rx$, 
the {\bf free skew field},
whose elements are called {\bf noncommutative rational functions} \cite{Coh,BR,KVV2}. For $1\le h<g$ let $\tilde{x}=(x_1,\dots,x_h)$, and fix a tuple $\rr=(\rr_{h+1},\dots,\rr_g)$ with $\rr_i\in\rtx$. The {\bf graph} of $\rr$ is\looseness=-1
$$\Gamma(\rr)=\bigcup_{n\in\N}\left\{
(\widetilde{X},\rr(\widetilde{X}))\in \mat{n}^g\colon \widetilde{X}\in\bigcap_i\dom\rr_i
\right\}.$$
An ideal $J\subset\ax$ is
\begin{enumerate}
	\item {\bf formally rationally resolvable} with rational resolvent $\rr$ if $J\cap \tx=\{0\}$ and the sets $\{x_{h+1}-\rr_{h+1},\dots,x_g-\rr_g\}$ and $J$ generate the same ideal in the amalgamated product
	$$\ax*_{\C\!\Langle \tilde{x}\Rangle}\,\rtx,$$
	the subring of $\rx$ generated by $\ax$ and $\rtx$.
	\item {\bf geometrically rationally resolvable} with rational resolvent $\rr$ if $\Gamma(\rr)\subseteq\cV(J)$ and for every $h\in \ax$, $h|_{\Gamma(\rr)}=0$ implies $h|_{\cV(J)}=0$.
\end{enumerate}

Intuitively, (1) and (2) allude to relations that can be resolved in a rational manner, either in algebraic or geometric sense. The Nullstellensatz property and rational resolvability are related as follows.

\begin{thm}[{\cite[Theorem 2.5 and Proposition 2.6]{KVV}}]\label{t:older}
	Let $J\subset\ax$ be an ideal. If $\ax/J$ embeds into a skew field and $J$ is formally rationally resolvable with a rational resolvent containing no nested inverses, then $J$ is geometrically rationally resolvable and has the Nullstellensatz property.
\end{thm}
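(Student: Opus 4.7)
The plan is to introduce a specialization homomorphism $\phi:\ax\to\rtx$ defined by $\phi(x_i)=x_i$ for $i\le h$ and $\phi(x_i)=\rr_i$ for $i>h$, and to reduce both conclusions to the identity $\ker\phi=J$. Granting this, both halves of geometric rational resolvability and the Nullstellensatz property fall out, because a noncommutative rational function in $\rtx$ vanishes identically iff it vanishes on its (automatically Zariski dense) domain.

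First, to show $J\subseteq\ker\phi$, extend $\phi$ to a homomorphism $\Phi:\ax*_\tx\rtx\to\rtx$ that is the identity on $\rtx$. Then $\Phi(x_i-\rr_i)=0$, so the ideal generated by $\{x_i-\rr_i:i>h\}$ in the amalgamated product lies in $\ker\Phi$. By formal resolvability this ideal equals the ideal generated by $J$, hence $\phi(f)=\Phi(f)=0$ for every $f\in J$.

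Second, to show $\ker\phi\subseteq J$, let $\pi:\ax\to K$ be the composition of the quotient $\ax\to\ax/J$ with the assumed embedding into a skew field $K$, so $\ker\pi=J$. Since $J\cap\tx=\{0\}$, the restriction $\pi|_\tx$ is injective. The universal property of the free skew field $\rtx$ (via Cohn's theory of epic $R$-fields) extends $\pi|_\tx$ uniquely to a homomorphism $\psi:\rtx\to K'$ onto the sub-skew-field $K'\subseteq K$ generated by $\pi(\tilde{x})$. The maps $\pi$ and $\psi$ agree on $\tx$, so they combine into a homomorphism $\Psi:\ax*_\tx\rtx\to K$. Now $\Psi$ kills $J$, so by formal resolvability $\Psi(x_i-\rr_i)=0$, i.e.\ $\pi(x_i)=\psi(\rr_i)$ for $i>h$. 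Therefore $\pi=\psi\circ\phi$, giving $\ker\phi\subseteq\ker\pi=J$.

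Finally, I would translate this into geometry. For $(\tilde{X},\rr(\tilde{X}))\in\Gamma(\rr)$ and $f\in J$, the element $\phi(f)\in\rtx$ is zero; the ``no nested inverses'' hypothesis ensures that $f(\tilde{x},\rr(\tilde{x}))$ evaluates at $\tilde{X}\in\bigcap_i\dom\rr_i$ to produce $f(\tilde{X},\rr(\tilde{X}))=0$, proving $\Gamma(\rr)\subseteq\cV(J)$. Conversely, given any $h\in\ax$ satisfying either $h|_{\cV(J)}=0$ (the Nullstellensatz hypothesis) or merely $h|_{\Gamma(\rr)}=0$ (the geometric resolvability hypothesis), evaluating $\phi(h)$ on its domain via the inclusion $\Gamma(\rr)\subseteq\cV(J)$ shows $\phi(h)$ vanishes on its domain and so $\phi(h)=0$ in $\rtx$; then $h\in\ker\phi=J$ and in particular $h|_{\cV(J)}=0$, yielding both remaining claims in one stroke. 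The main obstacle is justifying the universal extension $\psi:\rtx\to K'$, which is exactly where one needs $\ax/J$ to embed into a skew field and not merely to be a domain; the ``no nested inverses'' condition is a secondary technicality guaranteeing that the substituted rational expression $\phi(f)$ evaluates admissibly at points of $\Gamma(\rr)$ without running into nested domain obstructions.
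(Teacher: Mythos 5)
The paper does not actually prove this statement: it is imported from \cite[Theorem 2.5 and Proposition 2.6]{KVV}, so your argument can only be judged on its own terms. Your skeleton is the right one: with $\phi:\ax\to\rtx$ the substitution $x_i\mapsto\rr_i$ for $i>h$, the inclusion $J\subseteq\ker\phi$ together with the density of domains of noncommutative rational functions yields $\Gamma(\rr)\subseteq\cV(J)$, and both remaining assertions would indeed follow from $\ker\phi\subseteq J$. (Even in the easy half there is an unacknowledged point: the paper defines $\ax*_{\C\Langle\tilde x\Rangle}\rtx$ as a subring of $\rx$, so your two appeals to a coproduct universal property, to build $\Phi$ and $\Psi$, require identifying this subring with the abstract amalgamated product; that is a citable fact from Cohn's coproduct theory, not a formality.)

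The genuine gap is the step producing $\psi:\rtx\to K'$. The free skew field does not have the universal property you invoke: being the universal field of fractions of $\tx$ means every epic $\tx$-field is a \emph{specialization} of $\rtx$, i.e.\ receives a homomorphism from a local subring of $\rtx$, not from all of $\rtx$. A total homomorphism $\rtx\to K$ extending $\pi|_{\tx}$ exists if and only if $\pi$ sends every full matrix over $\tx$ to an invertible matrix over $K$; moreover, since $\rtx$ is a skew field, such a $\psi$ would automatically be an embedding, forcing the subfield generated by $\pi(\tilde x)$ to be the free skew field itself. Injectivity of $\pi|_{\tx}$ --- which is all that $J\cap\tx=\{0\}$ gives --- implies none of this: the free algebra admits epic skew fields of fractions that are not universal, and the skew field $K$ in the hypothesis is arbitrary. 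Hence the factorization $\pi=\psi\circ\phi$, and with it the crucial inclusion $\ker\phi\subseteq J$, is unproven; this is exactly where the content of the theorem lies. A telltale sign is that in your write-up the ``no nested inverses'' hypothesis is never genuinely used: substituting the resolvent into a \emph{polynomial} introduces no new inverses, so the evaluation on $\Gamma(\rr)$ to which you assign it is unproblematic for any resolvent. Its real role is in the step you skipped: one must work with the specialization from $\rtx$ to the subfield generated by $\pi(\tilde x)$, and absence of nested inverses is what guarantees that each $\rr_i$ lies in the local domain of that specialization (the only inverses occurring are of nonzero $q\in\tx$, and $\pi(q)\neq0$ makes $q$ a unit of that local ring); one must then still argue, by pushing the formal resolvability identity into $K$, that the specialization sends $\rr_i$ to $\pi(x_i)$. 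As written, the proposal does not close this gap.
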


Here, nested inverses refer to presentations of noncommutative rational functions; for example, $(x_1-x_3x_2^{-1}x_4)^{-1}$ cannot be presented without an inverse inside of an inverse, while $(x_1-x_2^{-1})^{-1}=x_2(x_1x_2-1)^{-1}$ admits a presentation without nested inverses. Finally, as in Subsection \ref{ss:real}, we see that if $f=f^*\in\px$ is hermitian, the real structure $\cJ$ preserves $\cV_n(f)\subset\mat{n}^g\times \mat{n}^g$, and its real points are related to the real free zero set of $f$ from the introduction, $\cV_n(f)^{\re}=\left\{(X,X^*)\colon X\in \cV_n^{\re}(f)\right\}$.

\subsection{A real Nullstellensatz for some free zero sets}

The proof of Theorem \ref{t:null} requires a lemma from commutative algebra and an embedding into a free skew field.

\begin{lem}\label{l:irr}
Let $M,L,R\in\opm_n(\C[\omega])$ be such that $\det (MLR) \neq 0$, and consider
$$\cW=\big\{(X,Y)\in \mat{n}^g\times\mat{n}\colon M(X)-L(X)YR(X)=0\big\}.$$
Let $\cW_1\subseteq\cW$ be the union of irreducible components of $\cW$ for which $\det M|_{\cW\setminus\cW_1}=0$ and $\det M$ is not identically zero on any component of $\cW_1$. Similarly, let $\cW_2\subseteq\cW$ be the union of irreducible components of $\cW$ such that $\det (LR)|_{\cW\setminus\cW_2}=0$ and $\det (LR)$ is not identically zero on any component of $\cW_2$. Then $\cW_1=\cW_2$ and $\cW_1$ is irreducible.
\end{lem}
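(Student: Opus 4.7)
\medskip

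\textbf{Proof proposal.} The plan is to identify a single irreducible subvariety $\cW^*\subseteq \cW$ and show that $\cW^*=\cW_1=\cW_2$. The natural candidate is the Zariski closure of the ``graph piece'' obtained by inverting $L$ and $R$. Concretely, set
$$U=\{X\in\mat{n}^g\colon \det L(X)\neq 0\ \text{and}\ \det R(X)\neq 0\}\subseteq\mat{n}^g,$$
which is a non-empty (and hence irreducible) Zariski open subset of $\mat{n}^g$ since $\det(LR)\not\equiv 0$. On $U$, the equation $M(X)-L(X)YR(X)=0$ has the unique solution $Y=L(X)^{-1}M(X)R(X)^{-1}$. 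Thus the preimage $\cW^\circ:=\cW\cap(U\times\mat{n})$ is the graph of a rational map $U\to\mat{n}$; in particular, $\cW^\circ$ is isomorphic to $U$ via projection and therefore irreducible. Let $\cW^*:=\overline{\cW^\circ}$ be its Zariski closure inside $\cW$. By construction $\cW^*$ is irreducible, and since $\det(LR)$ is nonzero on the dense subset $\cW^\circ$, it is not identically zero on $\cW^*$. Similarly, $\det M$ is nonzero at some point of $U$ (because $\det M\not\equiv 0$), and via the graph description this gives a point of $\cW^\circ$ where $\det M\neq 0$, so $\det M$ is not identically zero on $\cW^*$ either.

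Next I would show that $\cW^*=\cW_2$. For the containment $\cW^*\subseteq\cW_2$, note that $\cW^*$ is irreducible and contained in some component of $\cW$; as $\det(LR)\not\equiv 0$ on $\cW^*$, that component is one of the components making up $\cW_2$. Conversely, let $\cZ$ be an irreducible component of $\cW$ with $\det(LR)$ not identically zero on $\cZ$. Then $\cZ\cap(U\times\mat{n})$ is a non-empty Zariski open subset of $\cZ$, hence dense in $\cZ$; but this open subset is contained in $\cW^\circ$, so $\cZ\subseteq\cW^*$, and by irreducibility $\cZ=\cW^*$. Hence $\cW_2=\cW^*$.

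An analogous argument shows $\cW^*=\cW_1$. Let $\cZ$ be an irreducible component of $\cW$ on which $\det M$ is not identically zero, and pick $(X,Y)\in\cZ$ with $\det M(X)\neq 0$. The equation $M(X)=L(X)YR(X)$ then forces $L(X),R(X),Y$ all to be invertible by rank considerations (a product of $n\times n$ matrices has full rank only if each factor does). Thus $X\in U$ and $Y=L(X)^{-1}M(X)R(X)^{-1}$, so $(X,Y)\in\cW^\circ$. Consequently $\cZ\cap\{\det M\neq 0\}\subseteq\cW^\circ$ is dense in $\cZ$, giving $\cZ\subseteq\cW^*$ and then $\cZ=\cW^*$ by irreducibility. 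Combined with the already-observed fact that $\det M\not\equiv 0$ on $\cW^*$, this yields $\cW_1=\cW^*$, and the lemma follows.

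I do not expect a serious obstacle: the main subtlety is making sure, in the $\cW_1$ direction, that vanishing of $\det M$ at a point of $\cW$ does \emph{not} a priori force vanishing of $\det L$ or $\det R$, so one must use a point where $\det M\neq 0$ to guarantee both that $(X,Y)\in U\times\mat{n}$ and that $Y$ is determined. Once that point-level invertibility argument is in place, the rest is standard dense-open/irreducibility bookkeeping.
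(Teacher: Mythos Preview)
Your argument is correct and is in fact more elementary than the paper's. Both proofs ultimately rest on the same geometric picture---the graph of $X\mapsto L(X)^{-1}M(X)R(X)^{-1}$ over the open set where $L,R$ are invertible---but the paper extracts the conclusions by a longer route: it first proves $\cW_2\subseteq\cW_1$ via a Euclidean-topology approximation argument (perturbing a point $(X,Y)$ with $\det(LR)(X)\neq0$ to nearby points with $\det M\neq0$ and invoking closedness of $\cW_1$), and then establishes irreducibility of $\cW_1$ by commutative algebra, showing that its coordinate ring embeds into the localization $\C[\omega]_{\cS}$ at $\cS=\langle\det M,\det L,\det R\rangle$. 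You bypass both steps: irreducibility is immediate since $\cW^*$ is the closure of a variety isomorphic to the irreducible open set $U\subseteq\mat{n}^g$, and the containment $\cW_1\subseteq\cW^*$ follows from the clean pointwise observation that $\det M(X)\neq0$ together with $M(X)=L(X)YR(X)$ forces each of $L(X),Y,R(X)$ to be invertible (this is the ``clear'' direction $\cW_1\subseteq\cW_2$ in the paper, which you exploit more fully). The only thing to make explicit is the sentence ``by irreducibility $\cZ=\cW^*$'': you have $\cZ\subseteq\cW^*\subseteq\cZ'$ for some component $\cZ'$ of $\cW$, and maximality of components gives $\cZ=\cZ'=\cW^*$.
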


\begin{proof}
By assumption we have $\cW_1\neq\emptyset$. It is clear that $\cW_1\subseteq \cW_2$. Now let $\cZ$ be a component in $\cW\setminus\cW_1$ and suppose $\cZ\not\subseteq \cW\setminus\cW_2$. Then $\cZ$ contains a Zariski dense subset of points $(X,Y)$ with $\det (L(X)R(X))\neq0$. Fix such a point and let $\ve>0$ be arbitrary. Since $Y=L(X)^{-1}M(X)R(X)^{-1}$, there clearly exists $(X_{\ve},Y_{\ve})\in \mat{n}^g\times\mat{n}$ such that
$$\det (M(X_{\ve})L(X_{\ve})R(X_{\ve}))\neq0,\quad
Y_{\ve}=L(X_{\ve})^{-1}M(X_{\ve})R(X_{\ve})^{-1},\quad
\|X-X_{\ve}\|, \|Y-Y_{\ve}\|\le\ve.$$
Therefore $(X_{\ve},Y_{\ve})\in \cW_1$ for every $\ve>0$. Since algebraic sets in a complex affine space are closed with respect to the Euclidean topology, we get $(X,Y)\in\cW_1$. Thus $\cZ\subseteq \cW_1$, a contradiction. Hence $\cW\setminus\cW_1\subseteq\cW\setminus\cW_2$ and therefore $\cW_2\subseteq\cW_1$.

Let $A$ be the coordinate ring of $\cW_1$. Then $\det M$ is not a zero divisor in $A$ by the definition of $\cW_1$, and neither is $\det (LR)$ by the previous paragraph. Therefore $A$ embeds into $A_{\cS}$, where $\cS$ is the multiplicative set generated by $\{\det M, \det L,\det R\}$. Let $J$ be the ideal in $\C[\omega,\upsilon]$ generated by the entries of $M-L\gY R$. Then $\C[\omega,\upsilon] \big/ \sqrt{J}$ is the coordinate ring of $\cW$ 
and by the previous paragraph,
\begin{equation}\label{e:comm}
A_{\cS} =\left(\C[\omega,\upsilon] \big/ \sqrt{J}\right)_\cS
=\C[\omega,\upsilon]_\cS \big/ (\sqrt{J})_\cS
=\C[\omega,\upsilon]_\cS \big/ \sqrt{J_{\cS}}
\end{equation}
since  localization is exact \cite[Proposition 2.5]{Eis} and commutes with the radical \cite[Proposition 2.2, Corollary 2.6 and Corollary 2.12]{Eis}. Note that matrices $L,R$ are invertible over $\C[\omega,\upsilon]_\cS$. Therefore the ideal $J_{\cS}$ in
$$\C[\omega,\upsilon]_\cS = \C[\omega]_{\cS}\otimes_{\C} \C[\upsilon]$$
is generated by the entries of $\gY-L^{-1}MR^{-1}$. By \eqref{e:comm} we thus have $A_\cS\cong\C[\omega]_{\cS}$, so $A$ is an integral domain and $\cW_1$ is irreducible.
\end{proof}

Let $y$ and $y'$ be two freely noncommuting variables.

\begin{lem}\label{l:rr}
Let $f\in\ax$ be nonconstant. Then the ideal $(f-y'y)$ in $\axyy$ is formally rationally resolvable (by pairing $y'$ with the resolvent $fy^{-1}$), and the quotient algebra $\axyy\!/(f-y'y)$ embeds into a free skew field.
\end{lem}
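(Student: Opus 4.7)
The lemma has two parts. For formal rational resolvability of $J := (f - y'y) \subseteq \axyy$ with resolvent $y' \mapsto fy^{-1}$, I check the two defining conditions. First, for $J \cap \axy = \{0\}$: for each $n$ and each $(X,Y) \in \mat{n}^g \times \GL_n(\C)$, set $Y' := f(X) Y^{-1}$; then $Y'Y = f(X)$, so the algebra homomorphism $\axyy \to \mat{n}$ sending $x \mapsto X$, $y \mapsto Y$, $y' \mapsto Y'$ annihilates $f - y'y$, hence all of $J$. Any $g \in J \cap \axy$ therefore vanishes on the Zariski-dense subset $\mat{n}^g \times \GL_n(\C)$ and hence on all of $\mat{n}^g \times \mat{n}$ for every $n$, so $g = 0$. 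Second, in the amalgamated product $B := \axyy *_{\axy} \C\plangle x,y\prangle$ the element $y$ is invertible, so the identity $y'y - f = (y' - fy^{-1})\,y$ shows that the two-sided ideals of $B$ generated by $f - y'y$ and by $y' - fy^{-1}$ coincide.

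For the embedding, the candidate skew field is $F := \C\plangle x,y\prangle$. The homomorphism $\Psi : \axyy \to F$ sending $x_i \mapsto x_i$, $y \mapsto y$, $y' \mapsto fy^{-1}$ kills $y'y - f$ and descends to $\bar\Psi : A \to F$, where $A := \axyy/J$. To prove injectivity, I plan to use the amalgamated product: the maps $\Psi$ on $\axyy$ and $\id_F$ on $F$ agree on $\axy$, hence by the universal property of $B$ induce a surjective homomorphism $\widetilde\Psi : B \to F$ that kills $y' - fy^{-1}$. In $B/(y' - fy^{-1})$ the generator $y'$ of $\axyy$ is identified with $fy^{-1} \in F$, so the image of $\axyy$ lies in the image of $F$, and $B/(y' - fy^{-1})$ is a quotient of the skew field $F$; it is nonzero (apply $\widetilde\Psi$ to see $1 \neq 0$), so $B/(y' - fy^{-1}) \cong F$. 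Combining with Part~(I) yields $B/JB \cong F$, and $\bar\Psi$ factors as $A \to B/JB \cong F$. Injectivity therefore reduces to the saturation identity $JB \cap \axyy = J$.

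The saturation identity is the main obstacle. My intended route is to deduce it from flatness of the amalgamated product: since $\axy$ is a fir whose universal skew field of fractions is $F$, the extension $\axy \hookrightarrow F$ is flat (a standard consequence of Cohn's theory), and this flatness transfers along the pushout $\axyy \to B = \axyy *_{\axy} F$, so that $\axyy \hookrightarrow B$ is flat as well. Flatness then yields $JB \cap \axyy = J$ immediately. If the flatness transfer needs to be avoided, a concrete fallback is Bergman's diamond lemma applied to the single rewriting rule $y'y \to f$: this has no overlap ambiguities (its left-hand side cannot overlap itself since it starts with $y'$ and ends with $y$, and $f \in \axy$ contains no $y'$), so confluence gives $A$ a $\C$-basis of monomials over $\{x_1,\dots,x_g,y,y'\}$ avoiding the substring $y'y$. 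Tracking such normal-form elements through the reduced-word structure of $B$ over $\axy$ and pushing any $y^{-1}$'s from the $F$-side back through $f - y'y$ rewrites every element of $JB \cap \axyy$ as an $\axyy$-combination of $f - y'y$, establishing the identity and completing the embedding.
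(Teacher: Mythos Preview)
Your treatment of formal rational resolvability (the evaluation argument for $J\cap\axy=\{0\}$ and the identity $f-y'y=(fy^{-1}-y')\,y$ in the amalgamated product) is correct and essentially identical to the paper's.

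The embedding argument, however, has a genuine gap. Your primary route rests on the claim that the universal skew field $F=\C\plangle x,y\prangle$ is flat over the free algebra $\axy$, attributed to ``Cohn's theory''. This is false. For $R=\axy$ with $g+1\ge2$ generators, take the right ideal $I=x_1R+yR$. Since $R$ is a fir, $I\cong R^2$ as a right $R$-module; tensoring the inclusion $I\hookrightarrow R$ with $F$ gives a map $F^2\to F$, $(f_1,f_2)\mapsto f_1x_1+f_2y$, which is surjective (as $x_1,y$ are units in $F$) and hence not injective. So $F$ is not flat over $R$, and the flatness-transfer argument collapses.

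Your diamond-lemma fallback correctly produces the normal-form basis of $A=\axyy/J$ (words avoiding the factor $y'y$), but the subsequent ``tracking normal-form elements through the reduced-word structure of $B$'' is not an argument; you never actually establish $JB\cap\axyy=J$. In fact the saturation detour is unnecessary. The paper works directly: it maps $A$ into the concrete ring $\gxy=\C\!\Langle x,y,y^{-1}\Rangle$ (which then embeds in a free skew field via the free group algebra), and proves injectivity by a degree argument. On normal-form words one sets $\tilde d(w)=(\#y,\#y',|w|)$, on reduced words in $\gxy$ one sets $\hat d(w)=(\#y,\#y^{-1},|w|)$, both ordered lexicographically; since $f$ is nonconstant and contains no $y,y'$, the substitution $y'\mapsto fy^{-1}$ takes a leading word of degree $(d_1,d_2,d_3)$ to one of degree $(d_1,d_2,d_3+(\deg f)d_2)$, an injective transformation, so distinct normal-form words have linearly independent images. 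Your diamond-lemma basis is exactly the right starting point; the missing ingredient is this leading-term comparison, not flatness or saturation.
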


\begin{proof}
First we claim that $(f-y'y)\cap \axy=\{0\}$.  For every element $p\in (f-y'y)$ we have
$$p(X,Y,f(X)Y^{-1})=0$$
for $X\in\mat{n}^g$ and $Y\in\GL_n(\C)$. Hence if $h\in (f-y'y)\cap \axy$, then $h$ is zero on $\mat{n}^g\times \GL_n(\C)$ for every $n\in\N$.
Since $\mat{n}^g\times \GL_n(\C)$ is Zariski dense in $\mat{n}^{g+1}$, we conclude that $h=0$.
Moreover, $f-y'y$ and $fy^{-1}-y'$ clearly generate the same ideal in $\gxy$. Therefore the ideal $(f-y'y)$ is formally rationally resolvable with the rational resolvent $fy^{-1}$. Consider the homomorphism
$$\phi:\axyy\!/(f-y'y)\to \gxy$$
defined by $y'\mapsto f y^{-1}$. Let $W$ be the set of words not containing $y'y$ as a sub-word and let $V=\spa W\subset\axyy$. If $\pi:\axyy\to \axyy\!/(f-y'y)$ is the canonical projection, then it is easy to see that $\pi|_V:V\to \axyy\!/(f-y'y)$ is an isomorphism of vector spaces. On $W$ we define a degree function $\tilde{d}:W\to \Z_{\ge0}^3$ by setting $\tilde{d}(w)=(d_1,d_2,d_3)$, where $d_1$ is the number of $y$'s in $w$, $d_2$ is the number of $y'$'s in $w$, and $d_3$ is the length of $w$. We extend $\tilde{d}$ to a function $V\to\Z_{\ge0}^3$ by
$$\tilde{d}\left(\sum_{w\in W} \alpha_w w\right)=\max_{w\colon \alpha_w\neq0}\tilde{d}(w),$$
where $\Z_{\ge0}^3$ is ordered lexicographically.

As a subalgebra of a free group algebra, $\gxy$ admits a natural basis consisting of reduced words in $x,y,y^{-1}$. For each such word $w$ we define $\hat{d}(w)=(d_1,d_2,d_3)$ analogously as above, where $d_2$ is now the number of $y^{-1}$ in $w$. Similarly as above we obtain the extension $\hat{d}:\gxy\to \Z_{\ge0}^3$.

Since the elements of $\ax\setminus\C$ are freely independent of $y$ and $y^{-1}$, by looking at the highest terms with respect to $\tilde{d}$ and $\hat{d}$ one observes that
$$\hat{d}\big((\phi\circ\pi|_V)(h)\big)=
\left(\tilde{d}(h)_1,\tilde{d}(h)_2,\tilde{d}(h)_3+(\deg f)\tilde{d}(h)_2\right)$$
for every $h\in V$. Therefore $\phi\circ\pi|_V$ is injective, so $\phi$ is an embedding. Hence we are done because the free group algebra embeds into a free skew field (see e.g. \cite[Corollary 7.11.8]{Coh}).
\end{proof}

\begin{prop}\label{p:null}
Let $f\in\ax$ be nonconstant and let $h\in\axyy$. Then there exists $N\in\N$ such that
$$h\left(X,Y,f(X)Y^{-1}\right)=0$$
for all $(X,Y)\in \mat{N}^g\times\GL_N(\C)$ implies $h\in(f-y'y)$. In particular, $(f-y'y)$ has the Nullstellensatz property.
\end{prop}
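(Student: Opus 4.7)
The plan is to apply Theorem \ref{t:older} of \cite{KVV} to the ideal $J=(f-y'y)\subset\axyy$. By Lemma \ref{l:rr}, $J$ is formally rationally resolvable with rational resolvent $\rr=fy^{-1}$, which contains no nested inverses, and the quotient $\axyy/J$ embeds into a free skew field. Theorem \ref{t:older} then yields that $J$ is geometrically rationally resolvable and has the Nullstellensatz property. The Nullstellensatz property is precisely the ``in particular'' part of the proposition; combined with geometric resolvability, it also gives that $h$ vanishing on the entire graph $\Gamma(fy^{-1})=\bigcup_n\{(X,Y,f(X)Y^{-1})\colon(X,Y)\in\mat{n}^g\times\GL_n(\C)\}$ forces $h\in(f-y'y)$. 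The content of the proposition beyond this is to upgrade ``vanishing on the full graph'' to ``vanishing on a single size $N=N(h,f)$.''

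For the upgrade, I would exploit the embedding $\phi\colon\axyy/(f-y'y)\hookrightarrow\gxy$ of Lemma \ref{l:rr} that sends the class of $h$ to $h(x,y,fy^{-1})\in\gxy$. The hypothesis $h(X,Y,f(X)Y^{-1})=0$ on all of $\mat{N}^g\times\GL_N(\C)$ is equivalent to the assertion that $\phi(\pi(h))\in\gxy$ vanishes identically on this slice. Since $\gxy$ embeds into the free skew field $\C\plangle x,y\prangle$ (cf.~\cite[Corollary 7.11.8]{Coh}), nonzero elements of $\gxy$ of bounded formal degree are detected at matrix evaluations of sufficiently large but still finite size: there is some $N=N(d)$ such that any nonzero $p\in\gxy$ of degree at most $d$ has a nonzero evaluation at some point of $\mat{N}^g\times\GL_N(\C)$. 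Because $\phi(\pi(h))$ has degree bounded linearly in $\deg h$ and $\deg f$, choosing $N$ above the corresponding threshold forces $\phi(\pi(h))=0$, and injectivity of $\phi$ then yields $h\in(f-y'y)$.

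The main obstacle is justifying this detection principle. One route is a linearization argument in the spirit of \cite[Theorem 5.8.3]{Coh}: represent $\phi(\pi(h))\in\gxy$ as the Schur complement of a full affine pencil whose size is controlled by $\deg h$ and $\deg f$, and invoke the stabilization statement in Lemma \ref{l:full} to deduce that this pencil is nonsingular at some point of $\mat{N}^g\times\GL_N(\C)$ for all $N$ beyond a computable bound. Alternatively, multiplying $\phi(\pi(h))$ by a suitable power of $y$ clears the occurrences of $y^{-1}$ and reduces the question to the standard fact that a nonzero element of $\axy$ has nonzero evaluation at some tuple of sufficiently large matrices, which is again Lemma \ref{l:full} applied to $1\times 1$ matrices.
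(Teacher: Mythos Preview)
Your overall architecture matches the paper exactly: invoke Lemma~\ref{l:rr} to verify the hypotheses of Theorem~\ref{t:older}, conclude that $(f-y'y)$ is geometrically rationally resolvable and has the Nullstellensatz property, and then argue separately for the size bound $N$. The paper dispatches the last step in one line by citing \cite[Theorem 3.9]{KVV}, whereas you attempt to sketch its proof.

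Your first sketch (linearize $\phi(\pi(h))$ and appeal to a rank-stabilization statement in the spirit of Lemma~\ref{l:full}) is in the right spirit and is essentially what lies behind \cite[Theorem 3.9]{KVV}, though as written it is only a plausibility argument rather than a proof.

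Your second sketch, however, has a genuine gap. In the free group algebra $\gxy$ the occurrences of $y^{-1}$ in $\phi(\pi(h))=h(x,y,fy^{-1})$ sit sandwiched between noncommuting factors, e.g.\ in terms of the shape $w_0\,f y^{-1}\,w_1\,f y^{-1}\cdots f y^{-1}\,w_k$ with $w_i\in\axy$. Multiplying on the left or right by any power of $y$ cannot clear the interior $y^{-1}$'s, so you do not land back in $\axy$ this way, and the reduction to the polynomial case via Lemma~\ref{l:full} does not go through as stated. One can salvage the idea by working entrywise over $\C[\gx,\gy]$ after fixing a matrix size (where commutative denominator-clearing is legitimate), but then you still need a separate argument to pass from ``vanishes at size $N$'' to ``vanishes as an element of $\gxy$,'' which is precisely the content of the cited theorem. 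In short, either cite \cite[Theorem 3.9]{KVV} as the paper does, or commit fully to the linearization route; the denominator-clearing shortcut does not work in the noncommutative setting.
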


\begin{proof}
By Lemma \ref{l:rr} and Theorem \ref{t:older}, the ideal $(f-y'y)$ is geometrically rationally resolvable has the Nullstellensatz property. The existence of the bound $N$ then follows by \cite[Theorem 3.9]{KVV}.
\end{proof}

\begin{thm}\label{t:null}
For a nonconstant $f=f^*\in \px$ assume that $f(X_0,X_0^*)\succ0$ for some $X_0$. Let $h\in\pxy$. Then
$\cV^{\re}(f-y^*y)\subseteq \cV^{\re}(h)$ if and only if $h\in(f-y^*y)$.
\end{thm}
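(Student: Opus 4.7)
The $(\Leftarrow)$ direction is routine; for $(\Rightarrow)$ the plan is to reduce to the analytic Nullstellensatz of Proposition \ref{p:null} via a Zariski density argument exploiting the real structure from Subsection \ref{ss:real}. First I would identify $\pxy$ with $\C\!\Langle x,x',y,y'\Rangle$ by renaming $x^*\mapsto x'$ and $y^*\mapsto y'$, and write $\tilde f\in\C\!\Langle x,x'\Rangle$ for the resulting analytic polynomial. The ideals $(f-y^*y)$ and $(\tilde f-y'y)$ then coincide, and Proposition \ref{p:null}, applied to $\tilde f$ in the $2g$ variables $(x,x')$, supplies a size $N\in\N$ such that $h\in(\tilde f-y'y)$ as soon as $h(X,X',Y,\tilde f(X,X')Y^{-1})=0$ for every $(X,X',Y)\in\mat{N}^{2g}\times\GL_N(\C)$. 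It thus suffices to show that the hypothesis $h|_{\cV^{\re}(f-y^*y)}=0$ propagates to this generic graph.

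To analyze the involution-free zero set, I would apply Lemma \ref{l:irr} with the role of its variable $Y$ played by $y'$ and with $M=\tilde f$, $L=I_n$, $R=y$. The positivity assumption $f(X_0,X_0^*)\succ 0$ forces $\det\tilde f\not\equiv 0$, so $\det(MLR)\neq 0$, and the lemma produces an irreducible component $\cW_n\subseteq\cV_n(\tilde f-y'y)$ whose generic points are of the form $(X,X',Y,\tilde f(X,X')Y^{-1})$ with $Y$ and $\tilde f(X,X')$ invertible. Hermiticity $f=f^*$ yields $\tilde f((X')^*,X^*)=\tilde f(X,X')^*$, so the real structure $\cJ\colon(X,X',Y,Y')\mapsto((X')^*,X^*,(Y')^*,Y^*)$ preserves $\cW_n$, and its fixed locus on $\cW_n$ coincides with $\cW_n\cap\cV_n^{\re}(f-y^*y)$.

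Next I would invoke Proposition \ref{p:clas}: the positivity hypothesis provides $X_0\in\mat{n_0}^g$ with $f(X_0,X_0^*)\succ 0$; setting $Y_0:=f(X_0,X_0^*)^{1/2}$ and $p_0:=(X_0,X_0^*,Y_0,Y_0)$ gives a real point of $\cW_{n_0}$, and the Jacobian of $\tilde f-y'y$ with respect to the $(y,y')$-coordinates at $p_0$ has full rank because $Y_0$ and $\tilde f(X_0,X_0^*)$ are invertible, so $p_0$ is a smooth point of $\cW_{n_0}$. Proposition \ref{p:clas} then gives that $\cW_{n_0}\cap\cV^{\re}(f-y^*y)$ is Zariski dense in $\cW_{n_0}$. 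To meet the size bound $N$, I would replace $X_0$ by the direct sum $X_0^{\oplus k}$ with $kn_0\ge N$; positivity persists since $f(X_0^{\oplus k},(X_0^{\oplus k})^*)=f(X_0,X_0^*)^{\oplus k}\succ 0$, and the same density argument at size $kn_0$ forces $h$ to vanish on $\cW_{kn_0}$, hence on the graph $\{(X,X',Y,\tilde f(X,X')Y^{-1}):(X,X',Y)\in\mat{kn_0}^{2g}\times\GL_{kn_0}(\C)\}$. A direct-sum embedding, using $h(A\oplus B)=h(A)\oplus h(B)$, transfers this to the generic graph over $\mat{N}^{2g}\times\GL_N(\C)$, so Proposition \ref{p:null} finally gives $h\in(f-y^*y)$.

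The main obstacle is the middle step: using Lemma \ref{l:irr} to pinpoint the correct irreducible component $\cW_n$ of $\cV_n(\tilde f-y'y)$ and verifying smoothness of the real point $p_0$ on it. The hypothesis $\{f\succ 0\}\neq\emptyset$ plays a double role here, guaranteeing both that $\tilde f$ is generically invertible (so Lemma \ref{l:irr} applies and the graph picture of $\cW_n$ is correct) and that a real point with invertible $Y_0$ and invertible $\tilde f(X_0,X_0^*)$ exists (so the Jacobian rank computation for smoothness succeeds); its persistence under direct sums is what ultimately allows us to meet the quantitative size bound of Proposition \ref{p:null}.
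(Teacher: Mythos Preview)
Your proposal is correct and follows essentially the same route as the paper: invoke Lemma~\ref{l:irr} to isolate the irreducible ``graph'' component $\cW$ of $\cV_n(f-y'y)$, check that the real structure $\cJ$ preserves it, produce a smooth real point from the positivity assumption to get Zariski density via Proposition~\ref{p:clas}, and then feed the resulting vanishing on the generic graph into Proposition~\ref{p:null}. The only cosmetic differences are that the paper takes $Y_0$ with $f(X_0,X_0^*)=Y_0^*Y_0$ rather than the positive square root, and it compresses your direct-sum maneuver to the phrase ``since $n$ can be taken arbitrarily large''; your version is slightly more explicit about meeting the size bound $N$.
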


\begin{proof}
Let $n\in\N$ be such that $f(X_0,X_0^*)=Y_0^*Y_0$ for some $X_0\in\mat{n}^g$ and $Y_0\in\GL_n(\C)$. Let $\gX$ be a $2g$-tuple of $n\times n$ generic matrices, and $\gY,\gY'$ two additional $n\times n$ generic matrices. Then $\det f(\gX)\neq0$ since $f(X_0,X_0^*)$ is invertible. By Lemma \ref{l:irr} there exists a unique irreducible component $\cW\subset \mat{n}^{2g+2}$ of $\cV_n(f-y^*y)$ such that $\det f(\gX)$ and $\det \gY$ are not identically 0 on $\cW$.

Since $f$ is hermitian, $\cW$ inherits the real structure $\JJ$ from $\mat{n}^{g+1}\times \mat{n}^{g+1}$. Note that the derivative of $f(\gX)-\gY'\gY$ at $(X_0,X_0^*,Y_0,Y_0^*)\in\cW^{\re}$ with respect to $\gy_{\imath\jmath}'$ equals $E_{\imath\jmath}Y_0$, where $E_{\imath\jmath}\in\mat{n}$ are the standard matrix units. The Jacobian matrix of the system of equations $f(\gX)-\gY'\gY=0$ at $(X_0,X_0^*,Y_0,Y_0^*)$ is then equal to
$$\big(
\overbrace{\star \quad \cdots \quad \star}^{(2g+1)n^2}\quad -I_n\otimes Y_0^\ti
\big)\in \C^{n^2\times (gn^2+gn^2+n^2+n^2)}.$$
Therefore $(X_0,X_0^*,Y_0,Y_0^*)\in\cW^{\re}$ is a nonsingular point of $\cW$, so $\cW^{\re}$ is Zariski dense in $\cW$ by Proposition \ref{p:clas}. Because $h$ vanishes on $\cW^{\re}$, it also vanishes on $\cW$. Since $\cW$ is the unique component of $\cV_n(f-y'y)$ on which $\det \gY$ does not constantly vanish, we have 
$$h\left(X,Y,f(X)Y^{-1}\right)=0$$
for all $(X,Y)\in \mat{n}^{2g}\times\GL_n(\C)$. Since $n$ can be taken arbitrarily large, we have $h\in (f-y^*y)$ by Proposition \ref{p:null}.
\end{proof} 

\subsection{A Positivstellensatz for hereditary quadratic polynomials}

Let $f\in\px$ be a hermitian hereditary quadratic polynomial. That is,
$$f=\alpha +\vec{x}^{\,*}v+v^*\vec{x}+ \vec{x}^{\,*} H\vec{x},$$
where $H$ is a hermitian $g\times g$ matrix, 
$\vec x$ is the column vector consisting of the variables $x_j$,
$v\in\C^g$ and $\alpha\in\R$. Then $\{f\succ0\}\neq\emptyset$ if and only if $-f$ is not a sum of (hermitian) squares (this is not true for more general polynomials, e.g., $x_1x_1^*-x_1^*x_1$).

\begin{cor}\label{c:posss}
Let $f\in\px$ be a nonconstant hermitian hereditary quadratic polynomial with $\{f\succ0\}\neq\emptyset$, and let $y$ be an auxiliary noncommuting variable. If $h\in\px$, then $h|_{\{f\succeq0\}}\succeq0$ 
if and only if
\begin{equation}\label{e:c2}
h=f_0+\sum_j f_j^*f_j
\end{equation}
for some $f_j\in \pxy$ with $f_0\in (f-y^*y)$.
\end{cor}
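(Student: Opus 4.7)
The $(\Leftarrow)$ direction is a direct evaluation: given $h = f_0 + \sum_j f_j^* f_j$ with $f_0\in(f-y^*y)$, for any $X \in \mat{n}^g$ with $f(X,X^*)\succeq0$ I would pick $Y \in \mat{n}$ (for instance, the positive semidefinite square root) satisfying $Y^*Y = f(X,X^*)$; then $(X,Y)\in\cV^{\re}(f-y^*y)$, so every element of the two-sided ideal $(f-y^*y)$---in particular $f_0$---vanishes at $(X,X^*,Y,Y^*)$. Consequently $h(X,X^*)=\sum_jf_j(X,X^*,Y,Y^*)^*f_j(X,X^*,Y,Y^*)\succeq0$.

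For the nontrivial $(\Rightarrow)$ direction, first note that $h$ is hermitian, since it takes hermitian values on the nonempty open set $\{f\succ0\}$, which is Zariski dense in its ambient matrix space. View $h\in\pxy$ as a polynomial with no $y$-dependence. For every $(X,Y)\in\cV^{\re}(f-y^*y)$ one has $f(X,X^*)=Y^*Y\succeq0$, so $X\in\{f\succeq0\}$ and hence $h(X,X^*)\succeq0$. Thus, as a hermitian element of $\pxy$, $h$ is PSD on $\cV^{\re}(f-y^*y)$, and the desired representation amounts to a Positivstellensatz for the ideal $(f-y^*y)$, complementing the Nullstellensatz of Theorem \ref{t:null}. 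The strategy is to produce a polynomial $P\in\pxy$ satisfying
\[
P \equiv h \pmod{(f-y^*y)}
\qquad\text{and}\qquad
P(X,X^*,Y,Y^*)\succeq0 \text{ for all }(X,Y)\in\mat{n}^{g+1},\ n\in\N.
\]
Once such a $P$ is at hand, Helton's sum-of-hermitian-squares theorem applied in $\pxy$ yields $P=\sum_jf_j^*f_j$ for some $f_j\in\pxy$, and then $h=(h-P)+P=f_0+\sum_jf_j^*f_j$ with $f_0:=h-P\in(f-y^*y)$, as required.

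The hard part will be constructing the PSD lift $P$. My plan is to exploit the hereditary quadratic form $f=\alpha+\vec x^{\,*}v+v^*\vec x+\vec x^{\,*}H\vec x$: since $f-y^*y$ is itself hereditary quadratic in $(x,y)$, I would try modifications of the form $P = h + a^*(y^*y-f) + (y^*y-f)a$, or more flexibly $P = h + b^*(y^*y-f)b$, with $a,b\in\pxy$ chosen via a Schur-complement / Gram-matrix analysis of $f-y^*y$ viewed as a quadratic form in $(1,\vec x,y)$. The hypothesis $\{f\succ0\}\neq\emptyset$ (together with the smooth-points argument used in the proof of Theorem \ref{t:null}) should provide enough rigidity for such a lift to exist; the hereditary quadratic assumption on $f$ is essential here, since otherwise the required global PSD certificate need not exist, as the unsignatured failure modes of Section \ref{sec3} already indicate.
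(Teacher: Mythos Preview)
Your $(\Leftarrow)$ argument is correct and identical to the paper's. For $(\Rightarrow)$ you have also identified exactly the right two-step structure: first obtain a representation $h=f_0+\sum_jf_j^*f_j$ with $f_0$ vanishing on $\cV^{\re}(f-y^*y)$, then apply Theorem~\ref{t:null} to conclude $f_0\in(f-y^*y)$. This is precisely the paper's route.

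The gap is in your ``hard part''. You propose to manufacture a globally PSD lift $P\equiv h\pmod{(f-y^*y)}$ by hand, via ans\"atze such as $P=h+b^*(y^*y-f)b$ and a Schur-complement/Gram analysis, but you do not carry this out; the sketch remains a hope rather than an argument. This step is genuinely nontrivial, and the paper does not attempt it from scratch either: it invokes the noncommutative Positivstellensatz of Helton--McCullough--Putinar \cite[Theorem~4.1 and Section~4.2.c]{HMP}, which applies because $f-y^*y$ is again a hereditary quadratic polynomial in $(x,y)$. That result directly yields $h=f_0+\sum_jf_j^*f_j$ with $f_j\in\pxy$ and $f_0|_{\cV^{\re}(f-y^*y)}=0$; Theorem~\ref{t:null} then upgrades the vanishing of $f_0$ to ideal membership. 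So the missing ingredient in your proposal is exactly the citation to \cite{HMP}; everything else you wrote is on target.
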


\begin{proof}
$(\Rightarrow)$ If $h|_{\{f\succeq0\}}\succeq0$, then $h|_{\cV(f-y^*y)}\succeq 0$, where $h$ is viewed as an element of $\pxy$. Since $f-y^*y$ is also a quadratic hereditary polynomial, there exist $f_j\in \pxy$ with $f_0|_{\cV(f-y^*y)}=0$ such that
$$h=f_0+\sum_j f_j^*f_j$$
by \cite[Theorem 4.1 and Section 4.2.c]{HMP} (or rather its version over $\C$). Moreover, $f_0\in (h-y^*y)$ by Theorem \ref{t:null}.

$(\Leftarrow)$ Suppose \eqref{e:c2} holds. Let $X$ be such that $f(X,X^*)\succeq0$. If $Y^*Y=f(X,X^*)$ then $f_0(X,X^*,Y,Y^*)=0$ and hence $h(X,X^*)\succeq0$.
\end{proof}

\begin{rem}
In general, $h|_{\{f\succeq0\}}\succeq0$ does not imply that $h$ is of the form
$$\sum_j f_j^*f_j+\sum_kg_k^* fg_k,$$
i.e., $h$ does not necessarily belong to the quadratic module generated by $f$. An example with $f=1-x^*x$ and $h=1-\frac43 x^*x+\alpha x^*xx^*x$ for $\frac13<\alpha<\frac49$ is a consequence of \cite[Section 3.1]{DAP}. Thus the slack variable $y$ of Corollary \ref{c:posss} is necessary.
\end{rem}

\begin{rem}
Corollary \ref{c:posss} is a rare example of an ``if and only if'' noncommutative Positivstellensatz.
Another one appears in \cite{HKM12} and applies to quadratic  $f$ whose positivity set is convex. For such $f$ our Corollary \ref{c:posss}
can be readily proved as a consequence of \cite[Theorem 1.1(1)]{HKM12}.
\end{rem}


\end{document}